\documentclass[11pt]{amsart}
\usepackage[top=1in, bottom=1in, left=1in, right=1in]{geometry}
\usepackage{amsfonts}
\usepackage{enumerate}
\usepackage{amsmath}
\usepackage{amssymb}
\usepackage{bbm}
\usepackage{placeins}
\numberwithin{equation}{section}
\usepackage{epsfig}
\usepackage{graphicx}
\usepackage{tabularx}
\usepackage[usenames,dvipsnames]{xcolor}
\usepackage{tikz}
\usepackage{times}
\usepackage{comment}
\usepackage{mathtools}
\usepackage{float}
\usepackage{bbm}
\usepackage{bm}
\usepackage{esvect}
\usepackage[backend=biber, sorting=nyt, url=false,
maxnames = 100,
doi=false]{biblatex}
\addbibresource{references.bib}
\usepackage{hyperref}
\usepackage{breakurl}
\hypersetup{
colorlinks = true,
linkcolor={black},
urlcolor={blue},
citecolor={blue},    
urlcolor = {blue},
citebordercolor = {0.33 .58 0.33},
linkbordercolor = {0.99 .28 0.23}}

\pagestyle{myheadings}
\newcommand{\kommentar}[1]{}

\newcommand{\mb}{\mathbb}

\newcommand{\mf}{\mathfrak}
\newcommand{\ol}{\overline}

\newcommand{\n}{\mf{n}_D}

\newcommand{\m}{\mathrm{m}}

\newtheorem{thm}{Theorem}[section]
\newtheorem{defn}[thm]{Definition}
\newtheorem{prop}[thm]{Proposition}
\newtheorem{coro}[thm]{Corollary}
\newtheorem{lem}[thm]{Lemma}
\newtheorem*{hypothesis*}{Hypothesis}
\newtheorem{conj}[thm]{Conjecture}

\theoremstyle{remark}
\newtheorem{rem}[thm]{Remark}


\DeclareMathOperator{\SL}{SL}
\makeatletter
\let\c@equation\c@thm
\makeatother
\title[An analogue of the Dedekind Eta function for 
Hecke Groups $H(\sqrt{D})$]{\boldmath An analogue of the Dedekind Eta function \\
for 
Hecke Groups $H(\sqrt{D})$}
\thanks{Dorian Goldfeld is partially supported by grant number 567168 from the Simons Foundation.}
\author[Basak]{Debmalya Basak}
\address{
Debmalya Basak: Department of Mathematics,
University of Illinois Urbana-Champaign,
Urbana, IL, 61801, USA and Max Planck Institute for Mathematics, Vivatsgasse 7, 53111 Bonn
Germany}
\email{dbasak2@illinois.edu, basakd@mpim-bonn.mpg.de}

\author[Goldfeld]{Dorian Goldfeld}
\address{
Dorian Goldfeld: Department of Mathematics, Columbia University, 2990 Broadway, New York, NY 10027, USA}
\email{goldfeld@columbia.edu}

\author[Heap]{Winston Heap}
\address{
Winston Heap: Max Planck Institute for Mathematics, Vivatsgasse 7, 53111 Bonn
Germany}
\email{winstonheap@gmail.com}

\author[Robles]{Nicolas Robles}
\address{Nicolas Robles: RAND Corporation, Engineering and Applied Sciences, 1200 S Hayes, Arlington, VA 22202, USA}
\email{nrobles@rand.org}

\author[Zaharescu]{Alexandru Zaharescu}
\address{
Alexandru Zaharescu: Department of Mathematics,
University of Illinois Urbana-Champaign,
Altgeld Hall, 1409 W. Green Street,
Urbana, IL, 61801, USA and Simion Stoilow Institute of Mathematics of the Romanian Academy, 
P. O. Box 1-764, RO-014700 Bucharest, Romania}
\email{zaharesc@illinois.edu} 
\begin{document}

 \subjclass[2020]{Primary: 11F03, 11F30. Secondary: 11F06, 11P82}
\keywords{Hecke groups, Dedekind eta function, modular forms, quadratic characters, partitions}

\maketitle
\begin{abstract}
Let $D\equiv 1\bmod{4}$ be a fundamental discriminant of a real quadratic field. We construct an analogue of the classical Dedekind eta function for the Hecke group $H(\sqrt{D})$. This gives rise to a new family of holomorphic modular functions for $H(\sqrt{D})$ which vanish at
the cusp at $\infty$. We establish results on the asymptotic growth and sign patterns of the Fourier coefficients associated to these modular forms.
\end{abstract}

\setcounter{tocdepth}{1}
\section{Introduction}\label{sec: Introduction} 
Let $\lambda>0$. Hecke \cite{Hecke} introduced the matrix groups $H(\lambda)$ generated by the two matrices
$$T_\lambda:= \begin{pmatrix} 1&\lambda\\0&1\end{pmatrix},\quad S = \begin{pmatrix} 0&-1\\1&\phantom{-}0\end{pmatrix}.$$ These are now called Hecke groups.
In the case $\lambda = 1$, the Hecke group $H(\lambda)$ is just the modular group $\SL(2,\mathbb Z).$  The Hecke groups act on the upper half plane $$\mathfrak h := \{x+iy\mid x\in\mathbb R,\; y>0\}$$ in the usual manner. If $\gamma=\left(\begin{smallmatrix} a&b\\c&d \end{smallmatrix}\right)\in H(\lambda)$ and $z\in\mathfrak h$, the action of $\gamma$ on $z$ is denoted $\gamma z$ and is given by $$\gamma z = \frac{az+b}{cz+d}.$$ 

\par
Hecke proved in \cite{Hecke}  that if 
$$\lambda = 2\cos(\pi/q),\;\qquad  (\text{\rm for an integer}\; q >3\; \text{\rm or} \; q=\infty),$$
then $H(\lambda)$ is a discrete subgroup of $\SL(2,\mathbb R)$ with fundamental domain $H(\lambda)\backslash \mathfrak h$ having finite area, that is, $H(\lambda)$ is a Fuchsian group of the first kind.  He also proved that if $\lambda$ is any real number satisfying
$\lambda > 2$ then $H(\lambda)$ is a discrete subgroup of $\SL(2,\mathbb R)$ with fundamental domain $H(\lambda)\backslash \mathfrak h$ having infinite area, that is, $H(\lambda)$ is a Fuchsian group of the second kind. Moreover, he showed that these are the only values of $\lambda$ for which $H(\lambda)$ is a discrete subgroup of $\SL(2,\mathbb R)$. In the case when $\lambda > 2$, the fundamental domain is given in Figure \ref{fig:fundamental_domain}.

\begin{figure}[t]\label{fig:fundamental_domain}
    \centering
    \def\lambdaVal{4}  
    \pgfmathsetmacro{\Lval}{\lambdaVal/2}

\begin{tikzpicture}[scale=1.5]
  \draw[->] (-3, 0) -- (3, 0) node[right] {$x$};

  \begin{scope}
    \clip (-\Lval, 0) rectangle (\Lval, 2);          
    \fill[gray!30] 
      (-\Lval, 0) -- (-\Lval, 2)
      -- (\Lval, 2) -- (\Lval, 0)
      -- (1,0) arc (0:180:1) -- cycle;  
  \end{scope}

  \draw[->] (0, 0) -- (0, 2) node[above] {$y$};

  \draw[thick] (-\Lval, 0) -- (-\Lval, 2);
  \draw[thick] (\Lval, 0) -- (\Lval, 2);

  \draw[thick, domain=180:0] plot ({cos(\x)}, {sin(\x)});

  \node[below] at (-\Lval, 0) {\small $ -\frac{\lambda}{2}$};
  \node[below] at (\Lval, 0) {\small $\frac{\lambda}{2}$};
  \node[below] at (-1, 0) {$-1$};
  \node[below] at (1, 0) {$1$};
  \node at (0.9, 1) {$|z| = 1$};
\end{tikzpicture}
\caption{Fundamental domain of the Hecke group $H(\lambda)$ for $\lambda > 2$.}
\end{figure}
\par
Non holomorphic Eisenstein series and Maass cusp forms for the Hecke groups $H(\lambda)$ have been an area of extensive research over the years. See Kubota \cite{Kubota} for the definition of the non holomorphic Eisenstein series. In Knopp--Sheingorn \cite{Knopp-Sheingorn}, certain holomorphic Poincar\'e series are constructed for $H(\lambda)$ with $\lambda > 2.$ The non holomorphic Maass cusp form associated to the base eigenvalue of the Laplacian for the Hecke group $H(\lambda)$ was found by Patterson \cite{Patterson} and Sullivan \cite{Sullivan}. In the paper on Sarnak's spectral gap question \cite{Kelmer-Kontorovich-Lutsko}, Kelmer, Kontorovich, and Lutsko show that the Hecke group $H(\lambda)$ (with $\lambda >2$) has only one Maass cusp form which is square integrable on the fundamental domain. This result is originally due to Phillips and Sarnak \cite{Phillips-Sarnak}. This Maass cusp form is  the Patterson--Sullivan base eigenfunction  with non-zero Laplace eigenvalue $< \tfrac{1}{4}.$ More recently, Karlovitz \cite{karlovitz2022extensionhejhalsalgorithminfinite} generalized Hejhal's algorithm \cite{Hejhal}, for explicitly computing  Maass cusp forms on congruence subgroups of $\SL(2,\mathbb Z)$, to the case of the Hecke groups $H(\lambda)$ with $\lambda >2.$
\par
The main goal of this paper is to construct holomorphic modular forms for the Hecke groups $H\big(\sqrt{D}\big)$  where $D\equiv 1 \bmod{4}$ is a fundamental discriminant of a real quadratic field. Since the smallest possible such discriminant is $D=5$ (which means $\sqrt{D}$ is bigger than 2), it will not be possible to have holomorphic modular forms on $H\big(\sqrt{D}\big)$ which are square integrable over the fundamental domain which has infinite area. Let
 $q = e^{2\pi iz}$ with $z\in\mathfrak h$. Our approach is the construction of an $H\big(\sqrt{D}\big)$ analogue of the classical Dedekind eta function $\eta(z)$ defined by
\begin{equation} \label{DedekindEta}
\eta(z) := q^{\frac{1}{24}}\prod_{n=1}^\infty \left(1-q^n \right)
\end{equation}
which satisfies the modular relations
\begin{align}\label{Eq: Classical Modular Relations}
\eta(z+1)  = e^{\frac{\pi i}{12}}\, \eta(z), \qquad\;\; \eta(-1/z)= \sqrt{-i z}\cdot \eta(z).
\end{align}
Recall that $$\eta(z)^{24} = \Delta(z),$$ the Ramanujan cusp form of weight $12$ for $\SL(2,\mathbb Z).$ In the case of the Hecke group $H\big(\sqrt{5}\big)$, we construct an analogue of Dedekind's eta function \eqref{DedekindEta} which we denote by $\eta_5(z)$ and also an analogue of the Ramanujan cusp form $\Delta(z)$ which is denoted by $\Delta_5(z).$  Their definitions are as follows.
\begin{defn} Let $z\in\mathfrak h$ and set $q = e^{\frac{2\pi i z}{\sqrt{5}}}.$ Let $\chi_{_5}$ be the primitive real character modulo $5$. We define
\begin{align}\label{Defn: eta5}
\eta_5(z) := q^{\frac15}\cdot \prod_{n=1}^\infty \bigg ((1-q^n )^{\chi_{_5}(n)} \prod_{a=1}^4 \left(1 - e^{\frac{2\pi i a}{5}}  q^n  \right)^{\chi_{_5}(a)}\bigg).
\end{align}
Furthermore, we define
$\Delta_5(z) := \eta_5(z)^5.$
\end{defn}

The series expansion for $\Delta_5(z)$ is given by
 \begin{align}\label{Delta5 Coefficients}
 \Delta_5(z)= \sum_{N=1}^\infty \tau_5(N) q^N &= q +(-5-5\sqrt{5})q^2 + \bigg( \frac{155+45\sqrt{5}}{2} \bigg)q^3 + (-280-170\sqrt{5})q^4 \notag \\
 &\quad + (1415+490\sqrt{5})q^5+\bigg ( \frac{20565+6965\sqrt{5}}{2}\bigg) q^6 \;+\;\; \cdots.
 \end{align}

In Section \ref{sec: Proof of Modularity Theorems}, we prove the following theorem using a generalization of Weil's proof \cite{Weil} of the modularity of the Dedekind eta function.
\begin{thm} \label{Theorem: Delta5isModular} Let $z\in\mathfrak h$. The function $\Delta_5(z)$ is a holomorphic modular form for the Hecke group $H\big(\sqrt{5}\big)$, that is,
$$\Delta_5\left(\frac{az+b}{cz+d} \right) = \Delta_5(z)$$
for all $\left(\begin{smallmatrix} a&b\\c&d\end{smallmatrix}\right)\in H\big(\sqrt{5}\big)$. Furthermore,  $\Delta_5(z)$ vanishes at the cusp $\infty$, that is,
$\lim\limits_{y\to\infty} \Delta_5(iy) = 0.$
\end{thm}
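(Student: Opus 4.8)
The plan is to verify the invariance of $\Delta_5$ on a generating set of $H(\sqrt{5})$ and then to read off the cusp behaviour. Since the elements $\gamma$ with $\Delta_5(\gamma z)=\Delta_5(z)$ form a subgroup and $H(\sqrt{5})=\langle S,T_{\sqrt5}\rangle$, it suffices to treat the two generators. The translation $T_{\sqrt5}\colon z\mapsto z+\sqrt5$ is immediate: with $q=e^{2\pi i z/\sqrt5}$, every factor of the product in \eqref{Defn: eta5} depends only on the powers $q^n$, and $q$ is fixed by $z\mapsto z+\sqrt5$; only the prefactor changes, via $q^{1/5}\mapsto e^{2\pi i/5}q^{1/5}$. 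Hence $\eta_5(z+\sqrt5)=e^{2\pi i/5}\eta_5(z)$, and the fifth power gives $\Delta_5(z+\sqrt5)=\Delta_5(z)$, exactly parallel to \eqref{Eq: Classical Modular Relations}.

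The substantive point is invariance under $S\colon z\mapsto -1/z$, under which $q\mapsto e^{-2\pi i/(\sqrt5\,z)}$ is not a power of $q$, so genuine analytic input is needed. I would take logarithms, expand each $\log(1-w)$ as a power series, and collect terms to obtain
$$\log\eta_5(z)=\frac{2\pi i z}{5\sqrt5}-\sum_{n,m\ge1}\frac{q^{nm}}{m}\Big(\chi_5(n)+\sum_{a=1}^4\chi_5(a)e^{2\pi i am/5}\Big).$$
The inner character sum is a Gauss sum; since $\chi_5$ is primitive, real, and $\tau(\chi_5)=\sqrt5$, it equals $\sqrt5\,\chi_5(m)$. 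This yields the clean decomposition $\log\eta_5(z)=\tfrac{2\pi i z}{5\sqrt5}-A(z)-\sqrt5\,B(z)$, where $A(z)=\sum_{n,m\ge1}\frac{\chi_5(n)}{m}q^{nm}$ and $B(z)=\sum_{n,m\ge1}\frac{\chi_5(m)}{m}q^{nm}$.

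To transform $A$ and $B$ under $z\mapsto-1/z$ I would follow Weil: specialize to $z=iy$ and take Mellin transforms in $y$, which produce $\Gamma(s)(2\pi/\sqrt5)^{-s}$ times $L(s,\chi_5)\zeta(s+1)$ for $A$ and times $\zeta(s)L(s+1,\chi_5)$ for $B$. Shifting the contour of the inverse Mellin integral across its poles and invoking the functional equations of $\zeta$ and $L(\,\cdot\,,\chi_5)$ — which interchange the two Dirichlet series, and hence the roles of $A$ and $\sqrt5\,B$, so that the combination reproduces itself precisely because the $\sqrt5$ matches $\tau(\chi_5)$ — converts $y$ into $1/y$; after analytic continuation in $z$ this yields a relation $\eta_5(-1/z)=\varepsilon\,\eta_5(z)$, and raising to the fifth power gives $\Delta_5(-1/z)=\varepsilon^5\,\Delta_5(z)$.

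The main obstacle is to show that $\varepsilon$ is a fifth root of unity carrying no power of $z$, i.e.\ that the weight is $0$, and then that $\varepsilon^5=1$. The decisive feature is that $\chi_5$ is even ($\chi_5(-1)=1$), so $L(0,\chi_5)=0$; this vanishing collapses the double pole at $s=0$ in the Mellin transform of $A$, removing precisely the $\log z$ term that in the classical case produces the factor $\sqrt{-iz}$. The remaining residues, governed by $\zeta(0)=-\tfrac12$, the residue of $\zeta$ at $1$, and the special values $L(\pm1,\chi_5)$, must then be matched against the linear term $\tfrac{2\pi i z}{5\sqrt5}$, with the normalizing exponent $\tfrac15$ playing the role of $-\tfrac12L(-1,\chi_5)$ (the analogue of $\tfrac1{24}=-\tfrac12\zeta(-1)$). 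Carrying out this residue bookkeeping and confirming that the resulting constant lies in $\tfrac{2\pi i}{5}\Z$ is where the real work lies. Finally, vanishing at the cusp is immediate: as $y\to\infty$, $q=e^{-2\pi y/\sqrt5}\to0$, and $\eta_5(z)=q^{1/5}\big(1+O(q)\big)$ gives $\Delta_5(iy)=q\big(1+O(q)\big)\to0$, consistent with \eqref{Delta5 Coefficients}.
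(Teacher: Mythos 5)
Your proposal is correct and follows essentially the same route as the paper's own proof: it is Weil's Mellin-transform argument, with your $A(z)$ and $\sqrt{5}\,B(z)$ being exactly the paper's $F_5(z)=-\log\Phi_5(z)$ and $F_5^\#(z)=-\log\Phi_5^\#(z)$ (via the same Gauss-sum identity $\tau(\chi_5)=\sqrt{5}$), the same transforms $\Gamma(s)(\sqrt{5}/2\pi)^{s}L(s,\chi_5)\zeta(s+1)$ and $\Gamma(s)(\sqrt{5}/2\pi)^{s}\zeta(s)L(s+1,\chi_5)$ interchanged by the functional equations, the same reduction to the generators $T_{\sqrt{5}}$ and $S$, and the same cusp argument. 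The residue bookkeeping you defer comes out even more cleanly than you anticipate: the pole at $s=0$ is simple with residue $L'(0,\chi_5)$ (since $L(0,\chi_5)=0$), and these constant contributions from $A$ and $\sqrt{5}\,B$ cancel against each other while $L(-1,\chi_5)=-\tfrac{2}{5}$ matches the exponent $\tfrac15$, so your $\varepsilon$ equals $1$ exactly, i.e.\ $\eta_5(-1/z)=\eta_5(z)$, precisely as the paper finds.
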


\begin{rem} \label{Rem: eta5}
We prove in Section \ref{sec: Proof of Modularity Theorems} that for every
$\gamma=\left(\begin{smallmatrix} a&b\\c&d\end{smallmatrix}\right)\in H\big(\sqrt{5}\big)$, we have $$\eta_5\left(\frac{az+b}{cz+d} \right) = u_\gamma\cdot \eta_5(z)$$ where $u_\gamma$ is a certain fifth root of unity. Since $u_\gamma^5=1$, this explains why taking the fifth power of $\eta_5$ gives $\Delta_5$ which satisfies the modular relations in Theorem \ref{Theorem: Delta5isModular}. In fact, we show that $\eta_5(z)$ is invariant under the action of $S=\left(\begin{smallmatrix} 0&-1\\1&0\end{smallmatrix}\right)$. This is also illustrated in Figure \ref{fig:Modularity}.
\end{rem} 
\begin{figure}[t]
 \centering
\includegraphics[width=0.6\linewidth]{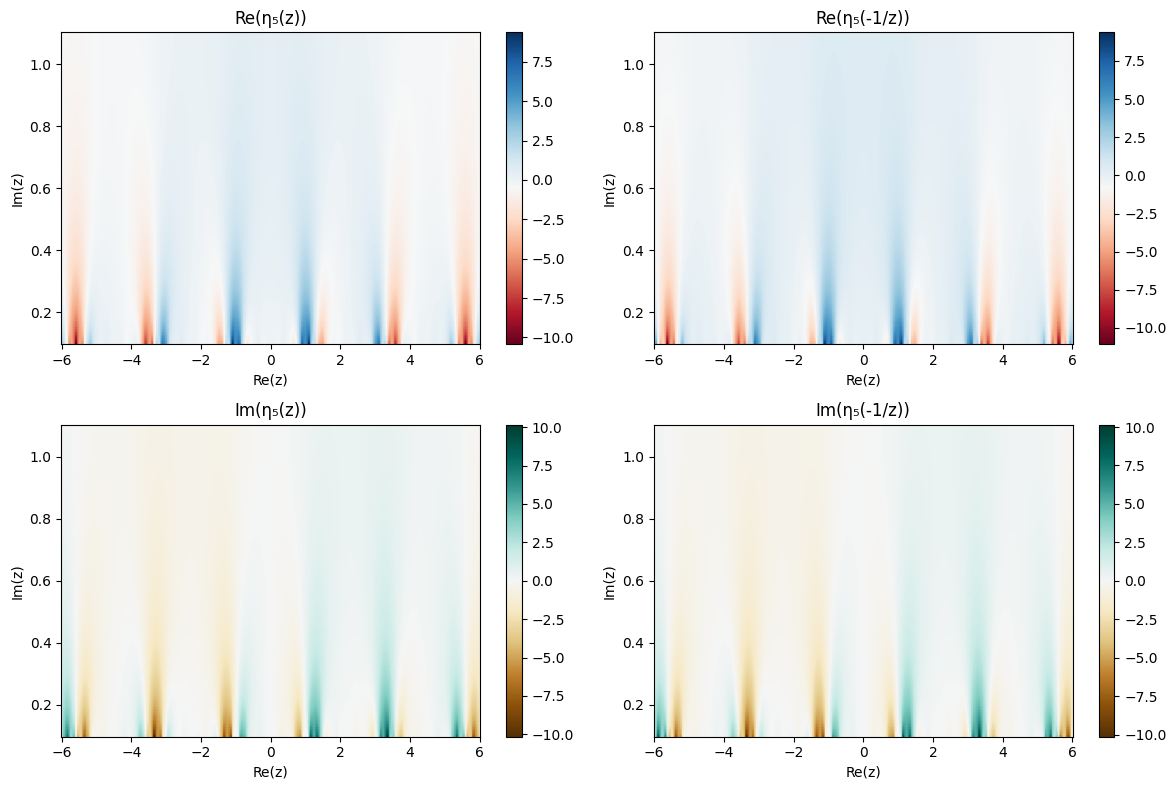}
 \caption{Modularity of $\eta_5(z)$ under the transformation $z \mapsto -1/z$, with the product over $n$ in \eqref{Defn: eta5} truncated at $n=300$. For $z = x + iy$ with $x \in (-6,6)$ and $y \in (0.1,1.1)$, the contour plots of $\operatorname{Re}(\eta_5(z))$ and $\operatorname{Re}(\eta_5(-1/z))$ coincide; likewise the contour plots of $\operatorname{Im}(\eta_5(z))$ and $\operatorname{Im}(\eta_5(-1/z))$ are identical.}
 \label{fig:Modularity}
\end{figure}
Next, we consider discriminants $D\equiv 1\bmod{4}$ with $D>5.$ In this case, we can define $\eta_{D}(z)$ as follows.

\begin{defn} Let $D\equiv 1\bmod{4}$ with $D>5$ be a fundamental discriminant of a real quadratic field. Let $\chi_D$ be the primitive real character modulo $D$. Let $\mathfrak{m}_D \in \mathbb{N}$ be such that $L(-1,\chi_D)=-2\mathfrak{m}_D$. \footnote{See Lemma \ref{lem: Value at s=-1} for a proof that $L(-1,\chi_{D})$ is a negative even number for $D>5$.} Let $z\in\mathfrak h$ and set $q = e^{\frac{2\pi i z}{\sqrt{D}}}.$ We define
\begin{align}\label{Defn: etaD}
\eta_{D}(z) := q^{\mathfrak{m}_D}\prod_{n=1}^\infty \bigg(\left(1-q^n   \right)^{{\chi_{D}}(n)}  \prod_{a=1}^D \left(1-  e^{\frac{2\pi ia}{D}} q^n  \right)^{{\chi_{D}}(a)}\bigg).
\end{align}
\end{defn}

In Section \ref{sec: Proof of Modularity Theorems}, we prove the following theorem.
\begin{thm} \label{Theorem: D>5}
Let $z\in\mathfrak h$. The function $\eta_{D}(z)$ is a holomorphic modular form for the Hecke group $H\big(\sqrt{D}\big)$, that is,
$$\eta_{D}\left(\frac{az+b}{cz+d} \right) = \eta_{D}(z)$$
for all $\left(\begin{smallmatrix} a&b\\c&d\end{smallmatrix}\right)\in H\big(\sqrt{D}\big)$. Furthermore, $\eta_{D}(z)$ vanishes at the cusp  $\infty$, that is, $\lim\limits_{y\to\infty} \eta_{D}(iy) = 0.$
\end{thm} 
At this stage, we make a few remarks.
\begin{rem} Curiously, we have not been able to find an analogue of the modular forms in the case when $D\equiv 3\bmod 4$. It would also be worthwhile to investigate connections between $\log  \eta_{D}(z)$, or any other function of $\eta_{D}$, with an Eisenstein series for $H(\sqrt{D})$. These constitute two interesting research problems for the future.
\end{rem}

\begin{rem}[Relation to Siegel functions]\label{rem:siegel-context}
Set $\tau=z/\sqrt D$, so that $q=e^{2\pi i z/\sqrt D}=e^{2\pi i\tau}$ is the classical $q$-parameter.
After this change of variables, the residue class subproduct over $a$ appearing in $\eta_D$
can be rewritten as products of Siegel functions (equivalently Siegel modular units)
up to explicit $q$-powers, Dedekind eta factors and roots of unity. On the other hand, the product involving the exponent $\chi_D(n)$ can be written as a ratio of products of Siegel functions by breaking $n$ into arithmetic progressions. More precisely, let $\zeta_D=e^{2\pi i/D}$ and for $0\le r,a\le D-1$, set $u=r/D$, $v=a/D$. Then the Siegel function $g_{u,v}$ has the product expansion
\[
g_{u,v}(D\tau)
= -\exp\!\Big(\pi i\,D\tau\,B_2(u)+2\pi i\,v\Big(u-\tfrac12\Big)\Big)
\prod_{m=0}^{\infty}\bigl(1-\zeta_D^{a}q^{Dm+r}\bigr)\bigl(1-\zeta_D^{-a}q^{Dm+D-r}\bigr),
\]
where $B_2(u)=u^2-u+\tfrac16$ is the second Bernoulli polynomial. We refer the reader to the standard $q$-product definition of Siegel functions (see Kubert--Lang \cite[Ch.2]{KubertLang}), which we have
specialized to the arguments $D\tau$ and $(u,v)=(r/D,a/D)$, and rewritten in terms of $q=e^{2\pi i\tau}$. See also \cite[Eq.~(15), Pg.~13]{brunault2023modularregulatorsmultipleeisenstein}.

We emphasize, however, that our modularity statements in Theorems \ref{Theorem: Delta5isModular} and \ref{Theorem: D>5} are formulated for the Hecke group
$H(\sqrt D)=\langle z\mapsto z+\sqrt D,\ z\mapsto -1/z\rangle$ in the $z$-variable.
In $\tau$-coordinates, the inversion corresponds to the map $\tau\mapsto -1/(D\tau)$,
which lies outside $\mathrm{SL}_2(\mathbb Z)$ and thus the standard transformation law $g_{(u,v)}(\gamma\tau) \to g_{(u,v)\gamma}(\tau)$ for $\gamma\in\mathrm{SL}_2(\mathbb Z)$ cannot be directly applied. Moreover, we show in Lemma \ref{Thm : Modular Relation}, the product over $a$ acts a dual function to the product with exponent $\chi_D(n)$ under the inversion $z\mapsto -1/z$ which plays a key role in making $\eta_D(z)$ modular. One could possibly attempt to establish modularity of $\eta_D(z)$ by studying each Siegel modular unit separately. But such an approach would likely require tracking explicitly the action of inversion on each factor arising from $(u,v)$. Our Weil style argument instead gives a direct and compact verification of the transformation laws for the Hecke group $H(\sqrt{D})$.
\end{rem}

\begin{rem} In the Hecke group $H(\sqrt{D})$ with $D\equiv 1\bmod{4}$ and $D\geq 5$, there is a hyperbolic element $\gamma_D = \left(\begin{smallmatrix} \sqrt{D}&-1\\1&0\end{smallmatrix}\right)$ with the fixed points
$$
\omega_D=\frac{\sqrt{D}+\sqrt{D-4}}{2}, \quad \textrm{and} \quad \omega_D^{\prime}=\frac{\sqrt{D}-\sqrt{D-4}}{2},
$$
that is, $\gamma_D \omega_D=\omega_D, \gamma_D \omega_D^{\prime}=\omega^{\prime}_D$. Consider the matrix $W_D=\left(\begin{smallmatrix} 1&-\omega_D\\1&-\omega_D^{\prime}\end{smallmatrix}\right)
\in \operatorname{SL}(2, \mathbb{R})$ which has the property that the conjugate of $\gamma_D$ by $W_D$ is a diagonal matrix. It follows that
$$
W_D \cdot \gamma_D \cdot W_D^{-1}=\left(\begin{array}{cc}
\sqrt{\kappa} & 0 \\
0 & 1 / \sqrt{\kappa}
\end{array}\right),
$$
with $\sqrt{\kappa}+1/\sqrt{\kappa}=\sqrt{D}$ since conjugation preserves the trace of a matrix. Then if $f(z)$ is modular for the group $H(\sqrt{D})$, the function $f\left(W_D^{-1} z\right)$ is invariant under the transformation $z \mapsto \kappa z$. Following \cite{Kontorovich}, a flare domain for the conjugated Hecke group $H(\sqrt{D})$ is isometric to a domain of the form $\{z \in \mathfrak{h}: 1<|z|<\kappa, \, 0<\arg (z)<\alpha\}$ for some angle $\alpha<\tfrac{\pi}{2}$. In this paper, the functions $\Delta_5(z)$ and $\eta_D(z)$ (with $D\equiv 1\bmod{4}$ and $D> 5$) are holomorphic functions which are modular for $H(\sqrt{D})$, vanish at the cusp at $\infty$, but are not square integrable in the flare domain.
\end{rem}
Theorems \ref{Theorem: Delta5isModular} and \ref{Theorem: D>5}, alongside Remark \ref{Rem: eta5} show that for $z \in \mathfrak{h}$, the function $\eta_{D}(z)$ admits the following Fourier expansion:
\begin{align}\label{Fourier Coefficients Definition}
\eta_{D}(z) = q^{-\frac{L(-1,\chi_{D})}{2}} \sum_{N=0}^{\infty} a_{D}(N) \,  q^{N}.
\end{align}
Here we have noted that $L(-1,\chi_5) = -\frac{2}{5}$. We refer the reader to Section \ref{sec: Graphical Data} where we present a table containing the exact values of \(a_{D}(N)\) in the cases \(D = 5, 13, 17\). A natural question at this stage is to understand the size of the Fourier coefficients $a_{D}(N)$. The growth of these coefficients is highly sensitive to the structure of the infinite product defining the modular form, and changes in the exponents can dramatically alter their behavior.

To appreciate the range of possibilities, it is instructive to compare two classical extremes. For the Dedekind eta function $\eta(z)$, the exponents of the factors in the product over $n$ in \eqref{DedekindEta} are all equal to $1$. In this setting, the Fourier coefficients $\tau(N)$ that arise from $\Delta(z) = \eta(z)^{24}$ are multiplicative and satisfy Deligne's bound \cite{Deligne74}, that is, 
\[
 \lvert \tau(p) \rvert  \leq 2 p^{\frac{11}{2}}, \quad \textrm{for all primes } p.
\] 
In particular, the coefficients show polynomial growth.

At the opposite extreme, if the exponents are all equal to $-1$ as in the case of $\eta(z)^{-1}$, the resulting coefficients are given by $p(N)$, the number of partitions of $N$. In this context, Hardy and Ramanujan \cite{Hardy-Ramanujan} showed that $p(N)$ exhibits exponential growth, that is,
\[
p(N) \sim \frac{1}{4 N \sqrt{3}} \exp \bigg ( \pi \sqrt{\frac{2}{3}} N^{\frac{1}{2}} \bigg), \quad \textrm{as} \quad N \to \infty.
\]
\par
The functions $\eta_D(z)$ defined in \eqref{Defn: eta5} and \eqref{Defn: etaD} occupy an intermediate position between these two extremes. Their defining infinite products involve exponents taking the values $1$ and $-1$, according to the real character $\chi_D$. This raises the question of whether the coefficients $a_D(N)$ exhibit polynomial growth, or whether the presence of negative exponents in the product formula instead leads to exponential growth. As a numerical illustration, we refer the reader to Figure \ref{fig:D=5 Pic1}, which appears to suggest that in the case $D=5$, $\log |a_5(N)|$ grows linearly with $\sqrt{N}$.
\par
\begin{figure}[t]
    \centering
    \includegraphics[width = .45\textwidth]{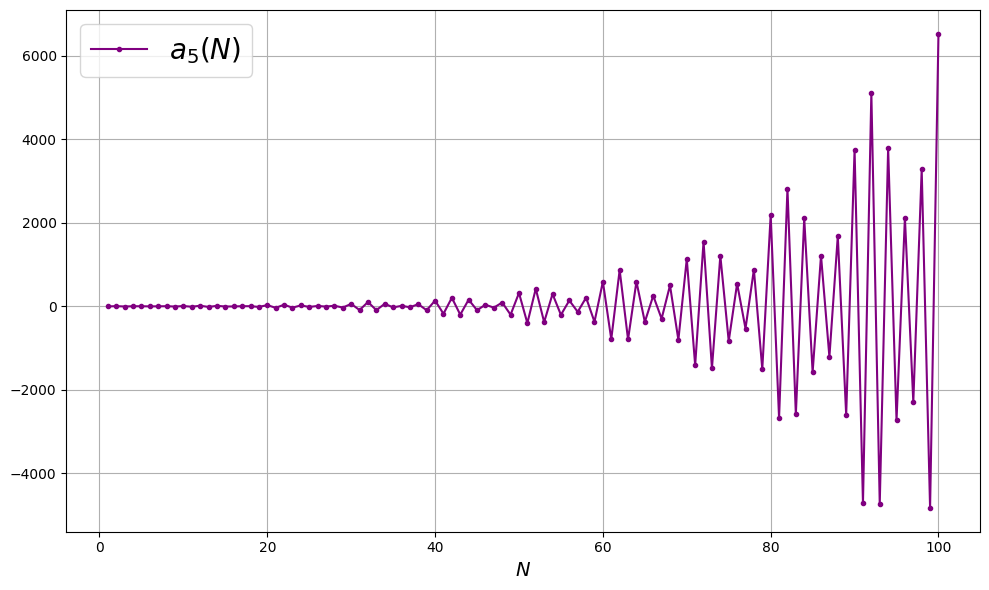}
 \includegraphics[width=0.45\linewidth]{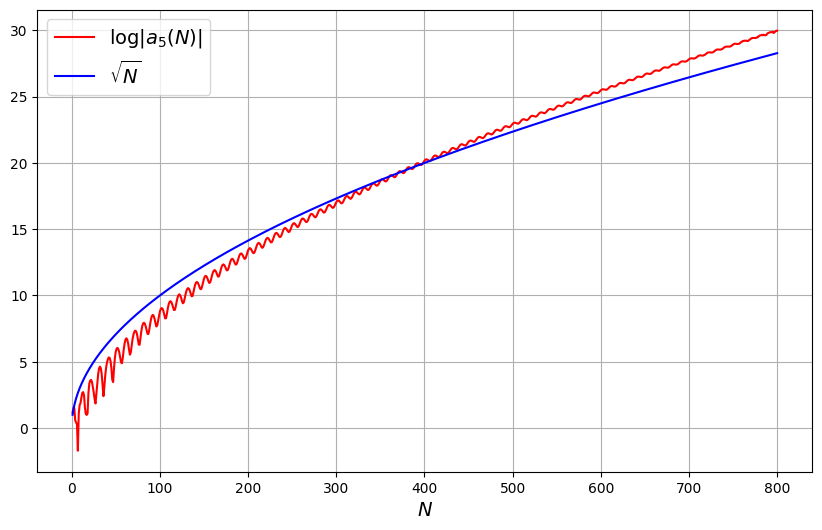}
\caption{Left: Plot of $a_5(N)$, $1 \leq N \leq 100$. Right: Growth of $\log \, \lvert a_5(N) \rvert $ (in red) compared to $\sqrt{N}$ (in blue), for $N =1$ to $800$.}
    \label{fig:D=5 Pic1}
\end{figure}
A second and equally fundamental question concerns the sign behavior of $a_D(N)$. Even when the growth rate of coefficients is understood, their oscillatory nature often encodes deeper arithmetic information. Here the functions $\eta_D(z)$ are built from infinite products whose exponents take both positive and negative values, and the factors also incorporate certain roots of unity. It is therefore not a priori clear whether the resulting Fourier coefficients $a_D(N)$ should display persistent sign changes or exhibit a dominant sign bias. A precise formulation of this question is to determine whether $a_D(N)$ changes sign infinitely often, and more generally, to quantify the frequency and distribution of such sign changes. We refer the reader to Figures \ref{fig:D=5 Pic1} and \ref{fig:Fourier Sequence}, where the plots for the sequence $a_D(N)$ for $D=5,13$ and $17$ display remarkably distinct patterns of oscillations and sign changes.
\par
In what follows, our objective is to precisely understand the arithmetic behavior of the Fourier coefficients $a_D(N)$ and answer both the questions raised above. To state our results, we need some notation. Let $\mf{n}_D$ be the least quadratic non-residue modulo $D$. Note that $\mf{n}_D$ is prime. The singular series, which will determine the sign patterns of $a_D(N)$, is given as follows. Define
\begin{equation}\label{sin series}
\mf{S}_D(N)
:=
\begin{cases}
(-1)^N,&\qquad\qquad \n=2,
\\
2\displaystyle\sum_{u=1}^{(\n-1)/2}\cos\bigg(\frac{2\pi uN}{\n}-\beta_D(u,\mf{n}_D)\bigg),
&\qquad\qquad \n\geq 3,
\end{cases}
\end{equation}
where
\begin{align}\label{Beta def}
\beta_D(u,\mf{n}_D)& : =
\frac{2\pi}{\phi(\mf{n}_D)} \operatorname{Re}\sideset{}{'}\sum_{\substack{\psi \bmod \mf{n}_D \\ \psi \textnormal{ complex, odd}}} \psi(u) \Big[
L(0,\overline{\psi})L(0,\chi_D\psi)-\psi(D) L(0,\psi)L(0,\chi_D\overline{\psi})\Big] \notag \\
&\quad + \mathbbm{1}_{\n \equiv 3\bmod 4} \cdot \frac{2\pi}{\phi(\mf{n}_D)}\chi_{\n}(u) L(0,\chi_{\n})L(0,\chi_D\chi_{\n}).
\end{align}
Here the primed sum indicates that we only take one representative from each pair of complex conjugate odd characters and $\chi_{\n}$ is the real primitive character modulo $\n$. By the functional equation, $\beta_D(u,\mf{n}_D)$ can be expressed in terms of the Dirichlet $L$-values at $s=1$ which is more computationally practical via the class number formula (this is given in Lemma \ref{L1 lem}).
\begin{figure}[t]
    \centering
    \includegraphics[width = .45\textwidth]{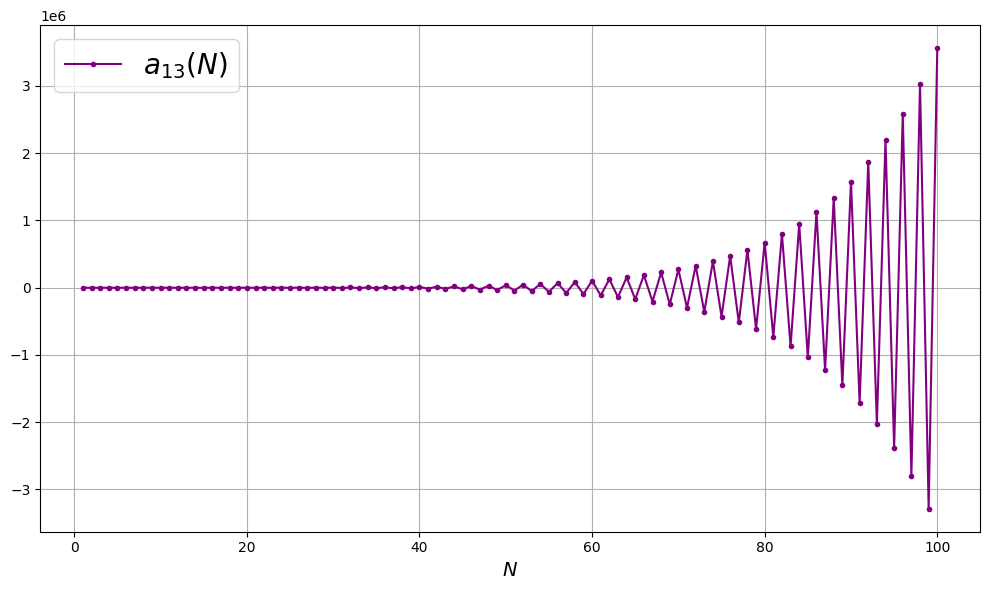}
 \includegraphics[width = .45\textwidth]{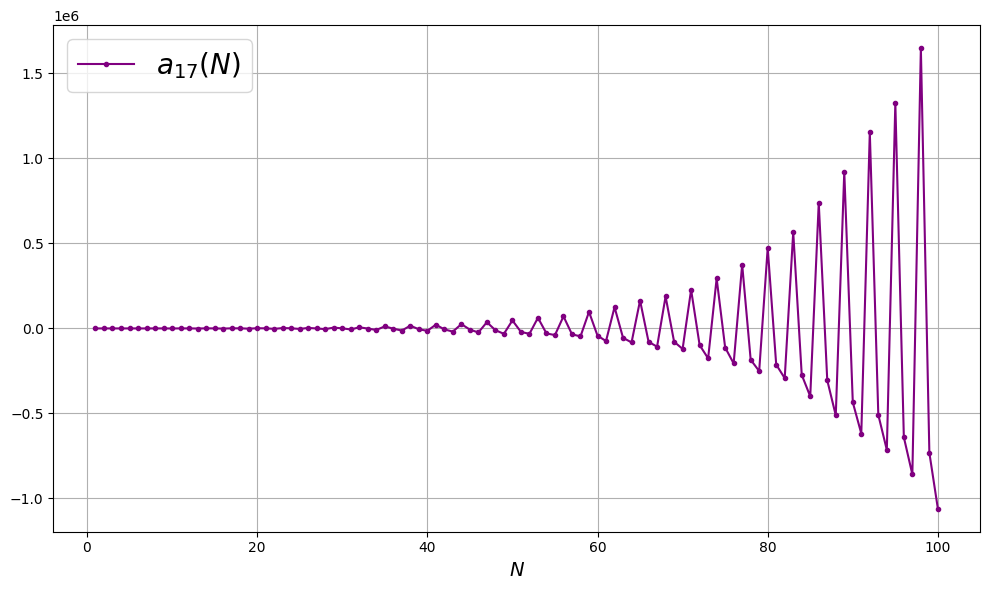}

    \caption{Left: Plot of $a_{13}(N)$, $1 \leq N \leq 100$. Right: Plot of $a_{17}(N), 1 \leq N \leq 100$. The sign pattern for $a_{13}(N)$ shows alternate positive and negatives. In contrast, for $a_{17}(N)$ the pattern follows two negatives followed by a positive sign.}
    \label{fig:Fourier Sequence}
\end{figure}
We are now ready to state our results.
\begin{thm}\label{gen thm}
Let $D\equiv 1\bmod{4}$ with $D>0$ be a fundamental discriminant of a real quadratic field. Let $\chi_D$ be the primitive real character modulo $D$, and $a_D(N)$ be given by \eqref{Fourier Coefficients Definition}. Then $a_{D}(N)$ lies in the ring of integers $\mathcal{O}_K= \mathbb{Z}\left[\frac{1 + \sqrt{D}}{2}\right] $ of the quadratic field $K=\mathbb{Q}(\sqrt{D})$. Furthermore, define 
\[
c_D
=\frac{\sqrt{D}L(2,\chi_D)}{\mf{n}_D^2},
\]
and
\[
\alpha_D=\sqrt{D}L(1,\chi_D)+\frac14\log c_D-\frac12\log(4\pi).
\]
Then if $\n<\sqrt{D}$, we have for $N \geq 1$,
\begin{align}\label{Eq: Key Asymptotic}
a_D(N)=\frac{e^{\alpha_D}\mf{S}_D(N)}{N^{3/4}}\exp  ({2\sqrt{c_DN}})(1+\mathcal{O}_D(N^{-\frac14})),
\end{align}
where $\mf{S}_D(N)$ is given by \eqref{sin series} and the implied constant in the error term depends at most on $D$.
\end{thm}

For clarity, we separately state the result for the special cases $\mf{n}_D=2,3$ which is an immediate corollary of Theorem \ref{gen thm}. Note that when $\n=3$, there is only one non-principal character modulo $3$ and hence the sum in $\beta_D(u,\mf{n}_D)$ is empty. 
 \begin{coro}\label{main thm}
Let the notations be as in Theorem \ref{gen thm}. If $\mf{n}_D=2$, we have for $N\geq 1$,
\[
a_D(N)=\frac{(-1)^Ne^{\alpha_D}}{N^{3/4}}\exp\big({2\sqrt{c_DN}}\big)(1+\mathcal{O}_D(N^{-\frac14})).
\]
If $\mf{n}_D=3$, we have for $N \geq 1$, 
\[
a_D(N)=\frac{2\cos(\frac{2\pi}{3}N-\beta)e^{\alpha_D}}{N^{3/4}}\exp\big({2\sqrt{c_DN}}\big)(1+\mathcal{O}_D(N^{-\frac14})),
\]
where 
\begin{align}\label{Eq: Beta for n_D=3}
\beta
=
\pi L(0,\chi_3)L(0,\chi_D\chi_3)
=
\frac{3\sqrt{D}}{\pi}L(1,\chi_3)L(1,\chi_D\chi_3),
\end{align}
and $\chi_3$ is the primitive real character modulo 3. 
\end{coro}
We make the following remarks.
\begin{rem}\label{Rem: Least Quadratic Nonresidue}
We do not pursue the case $\n \ge \sqrt{D}$ in Theorem \ref{gen thm}, since we believe it to be never true in our context. In fact, when $D$ is prime, we expect substantially stronger bounds of the form $\mf{n}_p \ll_{\varepsilon} p^{\varepsilon}$ following Vinogradov's conjecture. Assuming the Generalized Riemann Hypothesis, it is known that $\mf{n}_p \le (\log p)^2$, see \cite{LLS}. Unconditionally, Leveque (see \cite{Lev}, pp.~122--123) proves the inequality
$\mf{n}_p < \sqrt{p}$
for all primes $p \neq 2,3,7,23$. This settles the case when $D$ is prime in Theorem \ref{gen thm}.

However, it seems difficult to establish the inequality $\mf{n}_D < \sqrt{D}$ for all composite moduli. By \cite{Bordignon} and \cite{Khale}, such results are known for all $D \geq \exp(\exp(
{9.594})$ say. However the lower bound on $D$ for which these results are valid are
somewhat astronomical and beyond computational limits, and therefore, it is not possible to verify the remaining values numerically.
\end{rem}
\begin{figure}[t]
    \centering
\includegraphics[width = .45\textwidth]{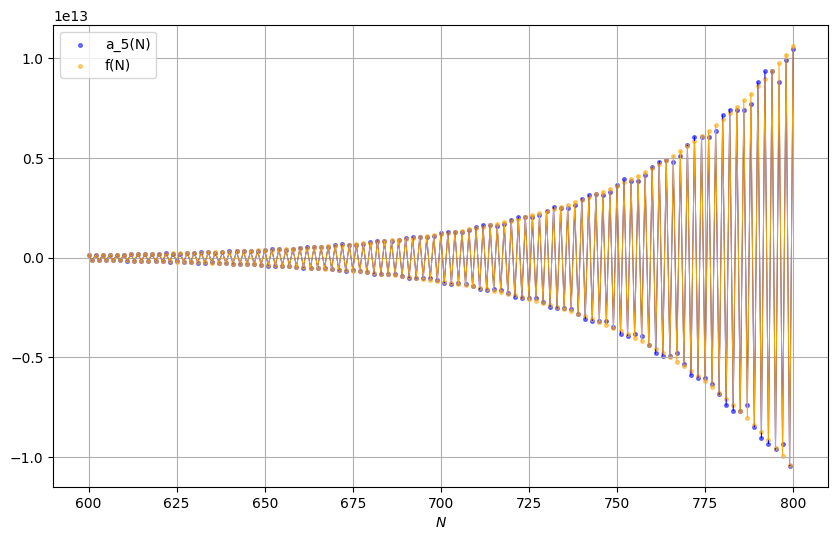}
 \includegraphics[width = .45\textwidth]{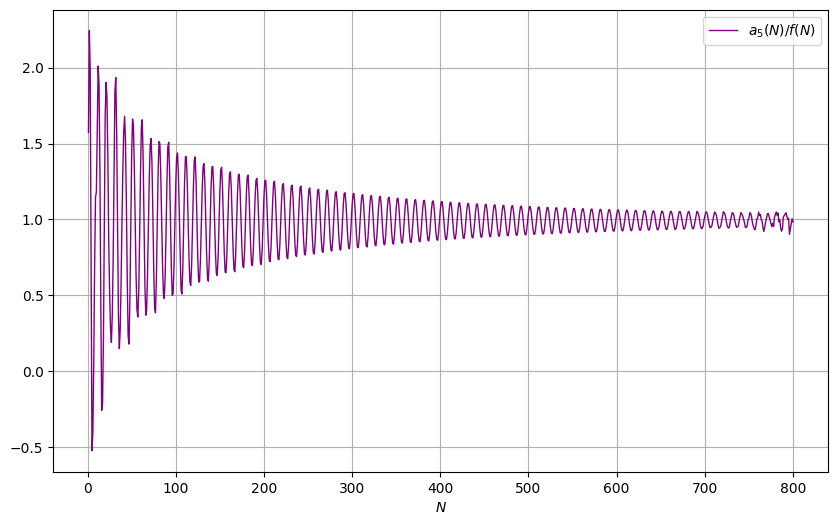}

    \caption{Left: Plot of $a_{5}(N)$ (in blue) against $f(N)$ (in orange), $600 \leq N \leq 800$, where $f(N) = (-1)^N \phi^2( N^{3/4}2\sqrt{5})^{-1} \exp(2 \pi \sqrt{N}/5)$ and $\phi = (1+\sqrt{5})/2$. The plotted values show close agreement. Right: Plot of $a_{5}(N)/f(N)$, $1 \leq N \leq 800$. The ratio tends to $1$ as $N$ becomes large.}
    \label{fig:a5comparison}
\end{figure}
\begin{rem}
When $D=5$, we have $\mf{n}_D=2$ and the special values 
\[
L(1,\chi_D)=\frac{2}{\sqrt{5}}\log \phi,\qquad L(2,\chi_D)=\frac{4\pi^2}{25\sqrt{5}},
\]
where $\phi=(1+\sqrt{5})/2$. In this case, $c_D=\pi^2/25$ and we have the asymptotic relation
\begin{align}\label{Eq: a5 Asymptotic}
a_5(N)=\frac{(-1)^N}{N^{3/4}} \cdot \frac{\phi^2}{2\sqrt{5}}\exp\bigg(\frac{2\pi\sqrt{N}}{5}\bigg) (1+\mathcal{O}(N^{-\frac14})).
\end{align}
We refer the reader to Figure \ref{fig:a5comparison} which compares the two sides of \eqref{Eq: a5 Asymptotic}.
\end{rem}
The sign changes of the Fourier coefficients $a_D(N)$ are governed by the singular series $\mf{S}_D(N)$. We can see from the formula for $\mf{S}_D(N)$ in \eqref{sin series}, that the sign patterns are periodic modulo $\n$. In specific cases, we can precisely compute the values of $\beta_D(u,\mf{n}_D)$ and therefore, the singular series $\mf{S}_D(N)$ using either
\[
L(0,\chi_D)=-\frac{1}{D}\sum_{a=1}^{D-1} \chi_D(a)a,
\] 
or by using the class number formula. This shall give a complete picture of the sign patterns. For instance, recall from Corollary \ref{main thm} that in the case $\n=3$, we have
\[
\mf{S}_{D}(N) = 2 \cos\bigg(\frac{2\pi}{3}N-\beta\bigg), \quad \textrm{where} \quad \beta = \pi L(0,\chi_3)L(0,\chi_D\chi_3).
\]
By evaluating $\beta$, we find that the sign patterns in this case takes a very simple form of a three periodic sequence which explains the nature of the graph for $a_{17}(N)$ in Figure \ref{fig:Fourier Sequence} (note that $\mf{n}_{17} =3$). We also compute an example case for $\n=5$, the first modulus for which this occurs being $D=73$. We present these results in the following proposition.
\begin{prop}\label{n3 prop}
Let the notations be as in Theorem \ref{gen thm}. For $\mf{n}_D=3$, the expression $\cos(\tfrac{2\pi}{3}N-\beta)$ takes the values 
\[
1,-\frac12,-\frac12
\]
periodically, where $\beta$ is given by \eqref{Eq: Beta for n_D=3}. In particular, for $D=17, 41$, we have
\[
\Sigma_D(N) = 2 \cos\bigg(\frac{2\pi}{3}N-\beta\bigg)=2,-1,-1
\]
for $N\equiv 1,2,3\bmod 3$ respectively. For $D=73$, we have for $N \geq 1$, 
\[
\mf{S}_{73}(N)
=
2\cos\bigg(\frac{2\pi}{5}N-\frac{6\pi}{5}\bigg)+2\cos\bigg(\frac{4\pi}{5}N+\frac{6\pi}{5}\bigg)
\]
which takes the values
\[
\frac{3-\sqrt{5}}{2}, -1, \frac{3+\sqrt{5}}{2}, -1+\sqrt{5}, -1-\sqrt{5}
\]
for $N\equiv 1,2,3,4,5 \bmod 5$ respectively. 
\end{prop}  

We end the introduction with a more detailed analysis of the case $D=5$. Note that the oscillations of $a_5(N)$ in Figure \ref{fig:D=5 Pic1} take a much more jagged appearance compared to those of $a_{13}(N)$ and $a_{17}(N)$ in Figure \ref{fig:Fourier Sequence}. As we shall see, this is due to the influence of secondary exponential terms. When $D=5$, these secondary terms are much closer in size to the leading term and therefore, their effect is more pronounced, particularly when $N$ is relatively small. They also include an oscillatory sign factor which is out of phase with that of the leading sign factor $(-1)^N$, thereby accounting for the jagged appearance. The following result states an asymptotic formula for $a_D(5)$ which includes these influential secondary exponential terms. 

\begin{thm}[Two term asymptotic expansion]\label{mod 5 thm}
Let the notations be as in Theorem \ref{gen thm}. Let $N \geq 1$ and $\phi = (1+\sqrt{5})/2$. For each fixed $J\ge 1$, we have 
\begin{align}\label{eq:two-cusp-theorem}
a_5(N)& =\frac{(-1)^N}{N^{3/4}} \frac{\phi^2}{2\sqrt5}\,
\exp\!\Big(\frac{2\pi}{5}\sqrt N\Big)\,
\bigg(1+\sum_{j=1}^J b_j\,N^{-j/2}+\mathcal{O}_J\big(N^{-(J+1)/2}\big)\bigg)\nonumber\\
&\quad+\frac{1}{N^{3/4}}\frac{\sqrt2\,\phi^2}{5^{3/4}}\,
\cos \Big(\frac{4\pi}{5}N+\frac{8\pi}{25}\Big) \exp \Big(\frac{4\pi}{5\sqrt5}\sqrt N\Big)\,
\,
\bigg(1+\sum_{j=1}^J \widetilde{b_j} \, N^{-j/2}+\mathcal{O}_J\big(N^{-(J+1)/2}\big)\bigg)\nonumber\\
&\quad+ \mathcal{O}\left(N^{-\frac34}\exp\!\Big(\frac{4\pi}{15}\sqrt N\Big)\right),
\end{align}
   where $b_j, \widetilde{b_j}$ are effectively computable absolute constants for $1 \leq j \leq J$. 
\end{thm}

\begin{rem}
When $D=5$, the leading exponential terms in \eqref{eq:two-cusp-theorem} are
\[
\exp\bigg( \frac{2 \pi \sqrt{N}}{5} \bigg) \quad \textrm{and} \quad  \exp\bigg( \frac{4 \pi \sqrt{N}}{5\sqrt{5}} \bigg),  \quad \textrm{where} \quad \frac{2 \pi}{5} = 1.2566\dots, \quad \frac{4 \pi}{5\sqrt{5}} =1.1239\dots.
\]
As we shall show later (see Remark \ref{Comparion Remark}), the first two leading exponential terms in the asymptotic expansion of $a_{13}(N)$ are given by
\[
\exp\bigg( \frac{2 \pi \sqrt{N}}{\sqrt{13}} \bigg) \quad \textrm{and} \quad  \exp\bigg( \frac{4 \pi \sqrt{N}}{13} \bigg),  \quad \textrm{where} \quad \frac{2 \pi}{\sqrt{13}} = 1.7426\dots, \quad \frac{4 \pi}{13} =0.9666\dots
\]
Therefore, in the case $D=5$, the secondary exponential term, alongside the cosine factor in the front, has a significantly more pronounced influence on the behavior of $a_5(N)$ than in the case $D=13$, particularly when $N$ is relatively small. This accounts for the difference in the graphs of $a_5(N)$ and $a_{13}(N)$ shown in Figures \ref{fig:D=5 Pic1} and \ref{fig:Fourier Sequence}.
\end{rem}

One can, in principle, acquire an infinite asymptotic series of decreasing exponential terms in \eqref{eq:two-cusp-theorem}. For $a_D(N)$ in general, the lower order terms become rather complex, which is why we have restricted our attention to just two terms in the $D=5$ case.

Finally, we highlight an exact formula connecting the Fourier coefficients $a_{D}(N)$ with classical partition functions, which we establish in Section \ref{sec: Fourier Coefficients} and which may be of independent interest. In particular, the twisted partition function
\begin{align}\label{Twisted Partition function}
p_{\textrm{ord}}(k,\theta) = \sum_{\lambda \vdash k} \theta^{\ell(\lambda)}, \quad \theta \in \mathbb{C}, \, |\theta| \le 1,
\end{align}
plays a central role in this correspondence. Here $\lambda$ denotes a partition of $k$ and $\ell(\lambda)$ its length, that is, the number of parts in the partition. When $\theta = 1$, this reduces to the ordinary partition function. In our setting, $\theta$ will be taken to be certain $D$-th roots of unity; we refer the reader to Section~\ref{sec: Fourier Coefficients} for a detailed account of this connection. The study of recurrence relations and congruence properties of the sequence $a_D(N)$ similar to the case of partitions is also of interest, and may be a topic of discussion on another occasion.

\subsection*{Structure of the paper} In Section \ref{sec: Phi Modular Relation}, we construct  certain eta-products twisted by real characters and derive modular relations for them. Using these modular relations, we prove Theorems \ref{Theorem: Delta5isModular} and \ref{Theorem: D>5} in Section \ref{sec: Proof of Modularity Theorems}. Sections \ref{Sec: Main Theorem Proof I} and \ref{Sec: Main Theorem Proof II} are devoted to the study of asymptotic behavior and sign changes of $a_D(N)$, leading to the proofs of Theorem \ref{gen thm} and Proposition \ref{n3 prop}. In Section \ref{Sec: Sketch Proof}, we provide a sketch of the proof of Theorem \ref{mod 5 thm}, building on the ideas from Sections \ref{Sec: Main Theorem Proof I} and \ref{Sec: Main Theorem Proof II}. In Section \ref{sec: Fourier Coefficients}, we explore some interesting connections between the Fourier coefficients \(a_{D}(N)\) and the theory of partitions. Finally, in Section \ref{sec: Graphical Data}, we present a table containing the exact values of \(a_{D}(N)\) in the cases \(D = 5, 13, 17\) for $1 \leq N \leq 25$.
\subsection*{General Notations.}
We employ some standard notation that will be used throughout the paper.

\begin{itemize}
    \item Throughout the paper, the expressions $f(X)=\mathcal{O}(g(X))$, $f(X) \ll g(X)$, and $g(X) \gg f(X)$ are equivalent to the statement that $|f(X)| \leq C|g(X)|$ for all sufficiently large $X$, where $C>0$ is an absolute constant. A subscript of the form $\ll_{\alpha}$ means the implied constant may depend on the parameter $\alpha$. Dependence on several parameters is indicated in an analogous manner, as in $\ll_{\alpha, \lambda}$.
    \item Throughout the paper, we shall let $D\equiv 1\bmod{4}$ with $D>0$ be a fundamental discriminant of a real quadratic field and $\chi_D$ shall denote the primitive real character modulo $D$. For $D>5$, we set $\mathfrak{m}_D \in \mathbb{N}$ be such that $L(-1,\chi_D)=2\mathfrak{m}_D$.
    \item We denote the least quadratic non-residue modulo $D$ by $\mathfrak{n}_D$ and $\chi_{\mathfrak{n}_D}$ shall be the primitive real character modulo $\mathfrak{n}_D$.
    \item For a primitive character $\chi \bmod q$, the Gauss sum is denoted by $\tau(\chi)$.
    \item The function $\phi$ denotes the Euler totient function. We also use the same notation to denote the golden ratio $(1+\sqrt{5})/2$, and this should be clear from the context.
    \item We write $\zeta_D$ to denote the $D$-th root of unity $\zeta_D = e^{\frac{2 \pi i }{D}}$.
    \item We set $e(x) = e^{2\pi i x}$ and use both notations interchangeably throughout the paper.
    \item We use the notation $n \equiv \square \bmod D$ to indicate that $n$ is a square modulo $D$. Analogously, the notation $n \not \equiv \square \bmod D$ means that $n$ is not a square modulo $D$.
    \item We denote by $\varepsilon$ an arbitrarily small positive quantity that may vary from one line to the next, or even within the same line. Thus we may write $X^{2 \varepsilon} \leq X^\varepsilon$ with no reservations.
    
\end{itemize}

\section{Modular Relations between Twisted Eta Products} \label{sec: Phi Modular Relation} Let $D\equiv 1\bmod{4}$ with $D>0$ be a fundamental discriminant of a real quadratic field. Let ${\chi_{D}}$ be the primitive real character modulo $D$. For $z \in \mathfrak{h}$, define the twisted product
\begin{align}\label{Eq: Phi Chi}
\Phi_{D}(z) := \prod_{n=1}^\infty \left(1-q^n   \right)^{{\chi_{D}}(n)}, \quad \textrm{where} \quad q = e^{\frac{2 \pi i z}{\sqrt{D}}}.
\end{align}
The Dirichlet $L$-function 
$L(s,{\chi_{D}}) =\sum\limits_{n=1}^\infty {\chi_{D}}(n) n^{-s}$ satisfies the functional equation (see \cite[Ch. 5]{IK2004})
\begin{align} \label{FunctionalEquationForL(s,chi)}
\Lambda(s,{\chi_{D}}) := \left(\frac{D}{\pi} \right)^{\tfrac{s}{2}} \Gamma\left(\frac{s}{2} \right) L(s,{\chi_{D}}) =  \Lambda(1-s,{\chi_{D}}).
\end{align}
In this section, we prove the following theorem which gives a modular relation between $\Phi_{D}$ and a certain dual of $\Phi_{D}$, denoted by $\Phi^\#_{D}$, which is defined as follows. For $z \in \mathfrak{h}$, let
\begin{align}\label{Eq: Dual Phi}
\Phi^\#_{D}(z) := \prod_{n=1}^\infty \prod_{a=1}^D \left(1-  e^{\frac{2\pi ia}{D}}\cdot q^n  \right)^{{\chi_{D}}(a)}.
\end{align}
\begin{thm} \label{Thm : Modular Relation} For $z\in \mathfrak{h}$, we have
\begin{align}\label{Thm: Modular Relation}
\Phi^\#_{D}(-1/z) = e^{L'(0,{\chi_{D}})-iz\cdot\frac{\pi \, L(-1,{\chi_D})}{\sqrt{D}}}\cdot \Phi_{D}(z),
\end{align}
where $\Phi_{D}(z)$ and $\Phi^\#_{D}(z)$ are defined by \eqref{Eq: Phi Chi} and \eqref{Eq: Dual Phi} respectively.
\end{thm}
We break the proof of Theorem \ref{Thm : Modular Relation} into the following subsections.
\subsection{The function $F_D(z)$.} 
By Taylor series expansion, we have
\[
\log(1-x) = -\sum\limits_{m=1}^\infty m^{-1} x^m, \quad \quad |x|<1.
\]
For $z \in \mathfrak{h}$, define
 \begin{align}
 F_D(z) & := \sum_{m=1}^\infty \sum_{n=1}^\infty \frac{{\chi_{D}}(n)}{m} e^{\frac{2\pi i mnz}{\sqrt{D}}}  =\sum_{n=1}^\infty {\chi_{D}}(n)\bigg (\sum_{m=1}^\infty m^{-1} \left(e^{\frac{2\pi i mnz}{\sqrt{D}}}\right)^m    \bigg )\notag \\
 & = -\sum_{n=1}^\infty {\chi_{D}}(n)\log\left (1 - e^{\frac{2\pi i mnz}{\sqrt{D}}} \right) =  - \log \Phi_{D}(z). \label{Fchi}
 \end{align}
Let $s\in\mathbb C$ with ${\rm Re}(s)>1$. Then for $\sigma>1,$
we have the Mellin transform pair
\begin{align*}
\widetilde{F}_D(s) &:= \int_0^\infty F_{D}(iu)\, u^s \, \frac{du}{u}, \quad \textrm{and} \quad
F_{D}(iu)= \frac{1}{2\pi i} \int_{\sigma-i\infty}^{\sigma+i\infty} \widetilde{F}_{D}(s) u^{-s} \, ds, \notag
\end{align*}
where $u>0$. We prove the following lemma.    
\begin{lem} \label{FunctionalEquationTildeFchi} Let $s\in\mathbb C$ with ${\rm Re}(s)>1$. The function  $\widetilde {F}_{D}(s)$ satisfies the functional equation
$$\widetilde {F}_{D}(s) = \bigg(\frac{\sqrt{D}}{2\pi}\bigg)^{s} \Gamma(s) L(s,{\chi_{D}}) \zeta(s+1) = \sqrt{D}\bigg(\frac{\sqrt{D}}{2\pi}\bigg)^{-s}\Gamma(-s)\,  \zeta(-s)\, L(1-s,{\chi_{D}}).$$
  \end{lem}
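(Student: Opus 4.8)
The plan is to compute $\widetilde{F}_{_D}(s)$ directly from the double-series definition of $F_{_D}$, and then extract the claimed symmetry from the functional equations of $\zeta$ and $L(\cdot,\chi_{_D})$. First I would set $z=iu$ in the double sum defining $F_{_D}$, so that each summand becomes $\frac{\chi_{_D}(n)}{m}\,e^{-2\pi mnu/\sqrt{D}}$, decaying exponentially in $u$. For $\mathrm{Re}(s)>1$ the resulting double sum is absolutely convergent after integration against $u^{s-1}$, which justifies interchanging summation and integration; each integral is then evaluated by the Gamma identity $\int_0^\infty e^{-au}u^{s-1}\,\mathrm{d}u=\Gamma(s)a^{-s}$ with $a=2\pi mn/\sqrt{D}$. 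This produces
\[
\widetilde{F}_{_D}(s)=\bigg(\frac{\sqrt{D}}{2\pi}\bigg)^{s}\Gamma(s)\sum_{m,n\geq 1}\frac{\chi_{_D}(n)}{m^{\,s+1}\,n^{s}},
\]
and the double Dirichlet series factors as $\zeta(s+1)\,L(s,\chi_{_D})$, which gives the first displayed expression of the lemma.

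For the functional equation I would substitute the functional equation \eqref{FunctionalEquationForL(s,chi)} for $L(s,\chi_{_D})$ (noting that $\chi_{_D}$ is \emph{even}, since $D\equiv 1\bmod 4$, so its archimedean factor is $\Gamma(s/2)$ rather than $\Gamma((s+1)/2)$) together with the functional equation of $\zeta$ evaluated at the shifted argument, namely the relation expressing $\zeta(s+1)$ in terms of $\zeta(-s)$. After these two substitutions, the $L$-value becomes $L(1-s,\chi_{_D})$ and the zeta value becomes $\zeta(-s)$, while all the Gamma factors collect into the single product
\[
\Gamma(s)\cdot\frac{\Gamma\!\big(\tfrac{1-s}{2}\big)}{\Gamma\!\big(\tfrac{s}{2}\big)}\cdot\frac{\Gamma\!\big(-\tfrac{s}{2}\big)}{\Gamma\!\big(\tfrac{s+1}{2}\big)}.
\]
I would simplify this using Legendre's duplication formula twice: once to collapse $\Gamma(s)\big/\big(\Gamma(s/2)\Gamma((s+1)/2)\big)$, and once to rewrite $\Gamma(-s/2)\,\Gamma((1-s)/2)$ in terms of $\Gamma(-s)$. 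What remains are only elementary powers of $2$, $\pi$, and $D$ coming from the two functional equations and the duplication formulas, which I would collect and verify recombine to the prefactor $\sqrt{D}\,(\sqrt{D}/2\pi)^{-s}$, completing the second equality.

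The main obstacle is entirely the bookkeeping in this second step: tracking the powers of $2$, $\pi$, and $D$ through both functional equations and the two applications of the duplication formula, so that they cancel and reassemble exactly into the stated prefactor. There is no conceptual difficulty once duplication is applied correctly; the only genuine subtlety is using the correct even-character archimedean factor $\Gamma(s/2)$ for $\chi_{_D}$, which is precisely what makes the $\Gamma(s/2)$ in the denominator match the factor generated by the duplication identity and allows the Gamma quotient to collapse cleanly.
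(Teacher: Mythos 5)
Your proposal is correct and follows essentially the same route as the paper: the identical direct Mellin computation giving $(\sqrt{D}/2\pi)^{s}\Gamma(s)L(s,\chi_{_D})\zeta(s+1)$, followed by the functional equations of $\zeta$ and $L(s,\chi_{_D})$ (with the even-character factor $\Gamma(s/2)$) combined via Legendre duplication. The only difference is organizational: the paper packages the bookkeeping by forming the product of completed functions $\xi(s+1)\xi(s,\chi_{_D})$ and invoking the symmetry $\xi(s+1)=\xi(-s)$, $\xi(s,\chi_{_D})=\xi(1-s,\chi_{_D})$, whereas you substitute the two functional equations directly and collect the Gamma factors with two applications of duplication --- the same computation arranged differently.
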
 
  \begin{proof} 
  We compute
  \begin{align*}
 \widetilde {F}_{D}(s) & = \int_0^\infty  \sum_{m=1}^\infty \sum_{n=1}^\infty \frac{{\chi_{D}}(n)}{m} e^{-\frac{2\pi  mnu}{\sqrt{D}}} \, u^s \,\frac{du}{u} =  \sum_{m=1}^\infty \sum_{n=1}^\infty  \frac{{\chi_{D}}(n)}{m} \int_0^\infty   e^{-\frac{2\pi  mnu}{\sqrt{D}}} \, u^s \,\frac{du}{u}\\
  & =  \sum_{m=1}^\infty \sum_{n=1}^\infty  \frac{{\chi_{D}}(n)}{m}\cdot \sqrt{D}^s\, (2\pi m n)^{-s} \Gamma(s) = \bigg(\frac{\sqrt{D}}{2\pi}\bigg)^{s} \Gamma(s) L(s,{\chi_{D}}) \zeta(s+1).
 \end{align*}   
We have the functional equation of the Riemann zeta function
$$\Lambda(s) := \pi^{-\frac{s}{2}}  \Gamma\left(\tfrac{s}{2} \right) \zeta(s) =\Lambda(1-s).$$
It follows from \eqref{FunctionalEquationForL(s,chi)} and the identity $\Gamma\left(\frac{s}{2}\right) \Gamma\left(\frac{s+1}{2}\right)= 2^{1-s} \sqrt{\pi}\,\Gamma(s)$ that
$$\Lambda(s+1)\Lambda(s,{\chi_{D}}) = 2\, D^{\frac{s}{2}} (2\pi)^{-s} \Gamma(s) \zeta(s+1) L(s,{\chi_{D}}) = \Lambda(-s)\Lambda(1-s,{\chi_{D}}).$$
Then we have the functional equation
  $$\widetilde {F}_{D}(s) = \sqrt{D}\bigg(\frac{\sqrt{D}}{2\pi}\bigg)^{-s}\Gamma(-s)\,  \zeta(-s)\, L(1-s,{\chi_{D}})
 $$
 which completes the proof.
\end{proof}
\subsection{The dual function $F_{D}^\#(z)$}  
We recall the identity (see \cite[Ch. 3]{IK2004})
\[
{\chi_{D}}(m) = \frac{1}{\sqrt{D}} \, \sum\limits_{a=1}^D {\chi_{D}}(a) e^{\frac{2\pi i am}{D}},
\]
which gives the Fourier expansion of ${\chi_{D}}$ in terms of additive characters. For $z \in \mathfrak{h}$, define the dual function
  \begin{align}
  F_{D}^\#(z) & := \sqrt{D}\sum_{m=1}^\infty\sum_{n=1}^\infty \frac{{\chi_{D}}(m)}{m} e^{\frac{2\pi i m n z}{\sqrt{D}}} = \sum_{a=1}^D {\chi_{D}}(a)\sum_{n=1}^\infty \bigg( \sum_{m=1}^{\infty} \frac{1}{m}\left(e^{\frac{2\pi ia}{D}}\cdot e^{\frac{2\pi i  n z}{\sqrt{D}}}  \right)^m \bigg) \notag \\
  & = -  \sum_{a=1}^D {\chi_{D}}(a)\sum_{n=1}^\infty \log\left(1-  e^{\frac{2\pi ia}{D}}\cdot e^{\frac{2\pi i  n z}{\sqrt{D}}} \right)=  - \log \Phi_{D}^\#(z). \label{FchiSharp}
  \end{align}
Let $s\in\mathbb C$ with ${\rm Re}(s)>1$. Then for $\sigma>1,$
we have the Mellin transform pair
\begin{align*}
\widetilde {F}^\#_{D}(s) &:= \int_0^\infty F^\#_{D}(iu)\, u^s\,\frac{du}{u}, \quad \textrm{and} \quad F^\#_{D}(iu)= \frac{1}{2\pi i} \int_{\sigma-i\infty}^{\sigma+i\infty} \widetilde {F}^\#_{D}(s)u^{-s} \, ds,
\end{align*}
where $u>0$. We establish the following lemma.  
 \begin{lem} \label{TildeFchiSharp}
  Let $s\in\mathbb C$ with ${\rm Re}(s)>1$. The dual function  $\widetilde {F}^\#_{D}(s)$ satisfies the functional equation 
 \[
 \widetilde{F}^\#_{D}(s) = \widetilde{F}_{D}(-s)= \sqrt{D} \bigg(\frac{\sqrt{D}}{2\pi}\bigg)^{s} \Gamma(s) L(1+s,{\chi_{D}}) \zeta(s).
 \]
 \end{lem}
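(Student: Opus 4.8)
The plan is to compute $\widetilde{F}^\#_{_D}(s)$ directly from its defining double series, mirroring the computation already carried out in the proof of Lemma \ref{FunctionalEquationTildeFchi}, and then to recognize the resulting closed form as $\widetilde{F}_{_D}(-s)$ by invoking the functional equation established there. In other words, the two asserted equalities split into one analytic step (the Mellin evaluation) and one purely formal step (a substitution $s\mapsto -s$).

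First I would put $z=iu$ in the series for $F^\#_{_D}$, so that $F^\#_{_D}(iu) = \sqrt{D}\sum_{m=1}^\infty\sum_{n=1}^\infty \frac{\chi_{_D}(m)}{m}\, e^{-\frac{2\pi mnu}{\sqrt{D}}}$, and insert this into $\widetilde{F}^\#_{_D}(s)=\int_0^\infty F^\#_{_D}(iu)\,u^{s}\,\frac{du}{u}$. For ${\rm Re}(s)>1$ the integrand is dominated by a convergent double series, so I may interchange summation and integration and evaluate each term by the Gamma integral $\int_0^\infty e^{-\frac{2\pi mnu}{\sqrt{D}}}u^{s-1}\,du = \sqrt{D}^{\,s}(2\pi mn)^{-s}\Gamma(s)$, exactly as in the earlier lemma. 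Collecting powers of $m$ and $n$, the sum over $m$ becomes $\sum_{m}\chi_{_D}(m)\,m^{-s-1}=L(1+s,\chi_{_D})$ while the sum over $n$ becomes $\sum_n n^{-s}=\zeta(s)$, giving $\widetilde{F}^\#_{_D}(s)=\sqrt{D}\,\big(\tfrac{\sqrt{D}}{2\pi}\big)^{s}\Gamma(s)\,L(1+s,\chi_{_D})\,\zeta(s)$. The one bookkeeping point worth flagging, and where this differs from Lemma \ref{FunctionalEquationTildeFchi}, is that here the extra factor $1/m$ is attached to the \emph{character} sum rather than the plain sum, so the roles of $L$ and $\zeta$ are interchanged; the leading factor $\sqrt{D}$ is precisely the Gauss-sum normalization built into the definition of $F^\#_{_D}$ through the Fourier expansion $\chi_{_D}(m)=\tfrac{1}{\sqrt{D}}\sum_a\chi_{_D}(a)e^{2\pi iam/D}$.

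To obtain the middle equality $\widetilde{F}^\#_{_D}(s)=\widetilde{F}_{_D}(-s)$, I would substitute $s\mapsto -s$ in the second expression for $\widetilde{F}_{_D}$ supplied by Lemma \ref{FunctionalEquationTildeFchi}, namely $\widetilde{F}_{_D}(s)=\sqrt{D}\,\big(\tfrac{\sqrt{D}}{2\pi}\big)^{-s}\Gamma(-s)\,\zeta(-s)\,L(1-s,\chi_{_D})$. This yields $\widetilde{F}_{_D}(-s)=\sqrt{D}\,\big(\tfrac{\sqrt{D}}{2\pi}\big)^{s}\Gamma(s)\,\zeta(s)\,L(1+s,\chi_{_D})$, which coincides term-for-term with the explicit formula just derived. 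Hence the chain of equalities in the statement holds, with the first identity coming from the direct computation and the second being the post-functional-equation form of $\widetilde{F}_{_D}$ read at $-s$.

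I do not expect a serious obstacle here; the only genuine analytic issue is justifying the interchange of the double sum with the integral, and this is immediate from absolute convergence for ${\rm Re}(s)>1$: the sum over $n$ converges because ${\rm Re}(s)>1$, and the sum over $m$ converges because $|\chi_{_D}(m)|\le 1$ and ${\rm Re}(s+1)>1$. Everything else reduces to careful tracking of the normalizing constants $\sqrt{D}$ and $(2\pi)^{-s}$.
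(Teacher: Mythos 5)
Your proposal is correct and follows essentially the same route as the paper: a term-by-term Mellin evaluation of the double series defining $F^\#_{_D}$ (yielding $\sqrt{D}\,(\sqrt{D}/2\pi)^{s}\Gamma(s)L(1+s,\chi_{_D})\zeta(s)$), followed by identifying this expression with $\widetilde{F}_{_D}(-s)$ via the functional equation of Lemma \ref{FunctionalEquationTildeFchi}. The paper's proof is precisely this computation plus the phrase ``combining this with Lemma \ref{FunctionalEquationTildeFchi},'' so your write-up, including the justification of the sum--integral interchange, is if anything slightly more detailed than the original.
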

\begin{proof} We compute
\begin{align*}
 \widetilde {F}^\#_{D}(s) & = \sqrt{D}\int_0^\infty  \sum_{m=1}^\infty \sum_{n=1}^\infty \frac{{\chi_{D}}(m)}{m} e^{-\frac{2\pi  mnu}{\sqrt{D}}} \, u^s \,\frac{du}{u} = \sqrt{D} \sum_{m=1}^\infty \sum_{n=1}^\infty  \frac{{\chi_{D}}(m)}{m} \int_0^\infty   e^{-\frac{2\pi  mnu}{\sqrt{D}}} \, u^s \,\frac{du}{u}\\
  & = \sqrt{D} \sum_{m=1}^\infty \sum_{n=1}^\infty  \frac{{\chi_{D}}(m)}{m}\cdot \sqrt{D}^s\, (2\pi m n)^{-s} \Gamma(s) = \sqrt{D} \bigg(\frac{\sqrt{D}}{2\pi}\bigg)^{s} \Gamma(s) L(s+1,{\chi_{D}}) \zeta(s).
  \end{align*}
Combining this with Lemma \ref{FunctionalEquationTildeFchi} completes the proof.
  \end{proof}
 
\subsection{Proof of Theorem \ref{Thm : Modular Relation}} \label{ProofMainTheorem}
We now present the proof of the modular relation \eqref{Thm: Modular Relation}. Since both sides of \eqref{Thm: Modular Relation} are analytic functions on $\mathfrak{h}$, by the Identity Theorem, it suffices to prove \eqref{Thm: Modular Relation} in the special case when $z=iy$ for any $y >0$. It follows from Lemma \ref{FunctionalEquationTildeFchi}
that $\widetilde{F}_{D}(s)$ has a double pole at $s=0$ with residue $L'(0,{\chi_{D}})$ and a pole at $s=-1$ with residue $\frac{\pi \,L(-1,\,{\chi_D})}{\sqrt{D}}$ and no other poles. Fix $\sigma > 1.$ Using inverse Mellin transform and shifting the line of integration to the left, we have
  \begin{align*}
  F_{D}(iy)  &= \frac{1}{2\pi i} \int_{\sigma-i\infty}^{\sigma+i\infty} \widetilde {F}_{D}(s)y^{-s} \, ds \\
  &= L'(0,{\chi_{D}}) +y\cdot\frac{\pi \, L(-1,{\chi_{D}})}{\sqrt{D}} + \frac{1}{2\pi i} \int_{-\sigma-i\infty}^{-\sigma+i\infty} \widetilde {F}_{D}(s)y^{-s} \, ds\\
  & =  L'(0,{\chi_{D}}) +y\cdot\frac{\pi \, L(-1,{\chi_{D}})}{\sqrt{D}}+ \frac{1}{2\pi i} \int_{\sigma-i\infty}^{\sigma+i\infty} \widetilde {F}_{D}(-s)y^{s} \, ds.
  \end{align*} 
  Now we apply Lemma \ref{TildeFchiSharp}. It follows that
\begin{align}\label{Eq: Before Exponentiation}
  F_{D}(iy) 
  & =  L'(0,{\chi_{D}}) +y\cdot\frac{\pi \, L(-1,{\chi_{D}})}{\sqrt{D}} +    \frac{1}{2\pi i} \int_{\sigma-i\infty}^{\sigma+i\infty}\widetilde{F}^\#_{D}(s)y^{s}\, ds \notag\\
  & = L'(0,{\chi_{D}}) +y\cdot\frac{\pi \, L(-1,{\chi_{D}})}{\sqrt{D}} + F_{D}^\#(i/y). 
  \end{align} 
Finally, we exponentiate both sides of \eqref{Eq: Before Exponentiation}. Using the identities \eqref{Fchi} and \eqref{FchiSharp}, we obtain
   $$\Phi_{D}(iy)^{-1} = e^{L'(0,{\chi_{D}})+y\cdot\frac{\pi \, L(-1,{\chi_D})}{\sqrt{D}}}\cdot \Phi_{D}^\#(i/y)^{-1}$$
   which is equivalent to
   $$\Phi_{D}^\#(i/y) = e^{L'(0,{\chi_{D}})+y\cdot\frac{\pi \, L(-1,{\chi_D})}{\sqrt{D}}}\cdot \Phi_{D}(iy).$$
This completes the proof. \qed

\section{Modular Forms on $H(\sqrt{D})$: Proofs of Theorems \ref{Theorem: Delta5isModular} and \ref{Theorem: D>5}}\label{sec: Proof of Modularity Theorems}
\subsection{Preliminaries} Here we prove Theorems \ref{Theorem: Delta5isModular} and \ref{Theorem: D>5}. We need a preliminary lemma regarding values of quadratic Dirichlet $L$-functions at $s=-1$. Some cases of this result has been alluded to before (see Zagier \cite[Pg. 54]{Zagier}). We provide a complete proof for any $D \equiv 1 \bmod 4$ for the reader's convenience.
\begin{lem}\label{lem: Value at s=-1}
Let $D\equiv 1\bmod{4}$ with $D>0$ be a fundamental discriminant of a real quadratic field. Let $\chi_{D}$ be the primitive real character modulo $D$. Then 
\[
L(-1,{\chi_D}) = \begin{cases}
  -\frac{2}{5},  & \textrm{if } D=5 \\
  -2\mathfrak{m}_D, \quad \textrm{for some $\mathfrak{m}_D \in \mathbb{N}$}, & \textrm{if } D > 5.
\end{cases}
\]
\end{lem}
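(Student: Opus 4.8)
The plan is to reduce the statement to an elementary divisibility analysis of a single integer character sum. I start from the standard evaluation $L(1-n,\chi)=-B_{n,\chi}/n$, where the generalized Bernoulli numbers satisfy $B_{n,\chi}=f^{\,n-1}\sum_{a=1}^{f}\chi(a)B_n(a/f)$ for a primitive character $\chi$ modulo $f$. Taking $n=2$, $f=D$, and $B_2(x)=x^2-x+\tfrac16$, the nontriviality of $\chi_{_D}$ gives $\sum_a\chi_{_D}(a)=0$, while evenness of $\chi_{_D}$ (via the pairing $a\leftrightarrow D-a$) gives $\sum_a\chi_{_D}(a)\,a=0$; hence the expansion collapses to
\[
L(-1,\chi_{_D})=-\frac{B_{2,\chi_{_D}}}{2}=-\frac{S}{2D},\qquad S:=\sum_{a=1}^{D-1}\chi_{_D}(a)\,a^2\in\mathbb{Z}.
\]
Everything is therefore governed by the integer $S$: I must show $S>0$, that $4\mid S$, and that $p\mid S$ for every odd prime $p\mid D$ when $D>5$.

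The case $D=5$ is immediate: with $\chi_{_5}$ the Legendre symbol, $S=1-4-9+16=4$, so $L(-1,\chi_{_5})=-4/10=-2/5$. For the sign in general I would use the functional equation \eqref{FunctionalEquationForL(s,chi)}, which upon comparing $\xi(-1,\chi_{_D})=\xi(2,\chi_{_D})$ gives $L(-1,\chi_{_D})=-c_D\,L(2,\chi_{_D})$ for an explicit constant $c_D>0$. Since $L(2,\chi_{_D})=\prod_p\bigl(1-\chi_{_D}(p)p^{-2}\bigr)^{-1}>0$ by its convergent Euler product, it follows that $L(-1,\chi_{_D})<0$, equivalently $S>0$.

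For the arithmetic of $S$ I argue prime by prime. Divisibility by $4$: modulo $4$ one has $a^2\equiv1$ for odd $a$ and $a^2\equiv0$ for even $a$, so $S\equiv\sum_{a\ \mathrm{odd}}\chi_{_D}(a)\pmod 4$; combining $\sum_a\chi_{_D}(a)=0$ with $\sum_a\chi_{_D}(a)(-1)^a=0$ (again via $a\leftrightarrow D-a$, using that $D$ is odd) forces $\sum_{a\ \mathrm{odd}}\chi_{_D}(a)=0$, whence $4\mid S$; as $D$ is odd this yields $4\mid B_{2,\chi_{_D}}$ and so $L(-1,\chi_{_D})$ is even. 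Divisibility by an odd prime $p\mid D$: the substitution $a\mapsto ca$ for $c$ coprime to $D$ permutes the reduced residues and gives $S\equiv\chi_{_D}(c)c^2\,S\pmod D$, so $\bigl(1-\chi_{_D}(c)c^2\bigr)S\equiv0\pmod p$. It then suffices to produce a single $c$ coprime to $D$ with $\chi_{_D}(c)c^2\not\equiv1\pmod p$. Factoring $\chi_{_D}=\chi_p\,\psi$, with $\chi_p=\bigl(\tfrac{\cdot}{p}\bigr)$ the unique quadratic character mod $p$ and $\psi$ the product of the remaining prime-conductor factors, Euler's criterion $\chi_p(c)\equiv c^{(p-1)/2}\pmod p$ turns the target into $c^{(p-1)/2+2}\psi(c)\not\equiv1\pmod p$.

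The crux, and the only genuine obstacle, is to see that such a $c$ fails to exist for exactly one $D$. Since $c\bmod p$ and $c\bmod(D/p)$ vary independently, the quantity $c^{(p-1)/2+2}\psi(c)$ equals $1$ for all $c$ only when both the power map $c\mapsto c^{(p-1)/2+2}$ is trivial on $(\mathbb{Z}/p)^{\times}$ and $\psi$ is trivial. The power map is trivial iff $(p-1)\mid\tfrac{p-1}{2}+2$, i.e. iff $2(p-1)\mid p+3$, which among all primes holds only at $p=5$; and $\psi$ is trivial iff $D=p$. Both conditions hold simultaneously only for $D=p=5$, which is precisely the degenerate coincidence $\chi_{_5}(c)\,c^2\equiv c^4\equiv1\pmod5$. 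Consequently, for every $D>5$ and every odd prime $p\mid D$ one can choose such a $c$, giving $p\mid S$ and hence $D\mid S$. Assembling the three facts, for $D>5$ we have $D\mid S$, $4\mid S$ and $S>0$, so $L(-1,\chi_{_D})=-B_{2,\chi_{_D}}/2$ is a negative even integer, i.e. $-2m$ with $m\in\mathbb{N}$, as claimed; the work is concentrated entirely in isolating the $p=5$ degeneracy and checking that the complementary character $\psi$ removes it whenever $D>5$.
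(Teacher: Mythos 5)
Your proposal is correct, and it reaches the crucial divisibility $4D\mid S$ by a genuinely different route than the paper. Both arguments begin with the same reduction $L(-1,\chi_{_D})=-\tfrac{1}{2D}\sum_{n=1}^{D}n^2\chi_{_D}(n)$ (the paper obtains it from the Hurwitz zeta expansion, you from $L(-1,\chi)=-B_{2,\chi}/2$; these are the same computation). From there the paper argues in two cases: for prime $D$ it splits $S=S_1-S_2$ over residues and non-residues, evaluates $S_2$ as a geometric-type series in a primitive root $g$ to get $(g^4-1)S_2\equiv 0 \pmod D$, and uses that $g^4\not\equiv 1$ exactly when $D>5$; for composite $D$ it decomposes the whole sum by CRT into products of such residue/non-residue sums and applies the prime case factor by factor. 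You instead prove the single twisting identity $(1-\chi_{_D}(c)c^2)S\equiv 0\pmod p$ for every unit $c$ and every prime $p\mid D$, reducing the problem to exhibiting one $c$ with $\chi_{_D}(c)c^2\not\equiv 1\pmod p$, and your group-theoretic analysis (the power map $c\mapsto c^{(p+3)/2}$ is trivial on $(\mathbb{Z}/p\mathbb{Z})^{\times}$ iff $p=5$, and $\psi$ is trivial iff $D=p$) isolates $D=p=5$ as the unique degeneracy. This is uniform in $D$ --- no separate composite case --- and it exposes structurally why $5$ is exceptional; in fact the paper's primitive-root computation is essentially your argument specialized to $c=g$, since $g^4-1=(g^2-1)(g^2+1)$ and $1+g^2=1-\chi_{_D}(g)g^2$ there. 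Your mod-$4$ step (via $a^2\equiv 1\pmod 4$ for odd $a$ together with $\sum_{a\ \mathrm{odd}}\chi_{_D}(a)=0$) is also cleaner than the paper's parity check on $2S_1-\tfrac{(D-1)D(2D-1)}{6}$. Finally, you prove $L(-1,\chi_{_D})<0$ from the functional equation and the positivity of $L(2,\chi_{_D})$; the paper leaves this sign fact implicit, even though it is needed to conclude $m\in\mathbb{N}$ rather than merely $m\in\mathbb{Z}$, so your write-up closes a small gap in addition to giving a more uniform argument.
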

\begin{proof}
The case $D=5$ can be verified numerically. So we may assume $D>5$. We have
\begin{align*}
L\left(-1, {\chi_{D}}\right) & =D \sum_{n=1}^{D} {\chi_{D}}(n) \zeta\left(-1, n/D\right)  =-\frac{D}{2} \sum_{n=1}^{D} {\chi_{D}}(n)\left(\frac{n^2}{D^2}-\frac{n}{D}+\frac{1}{6}\right) =-\frac{1}{2 D} \sum_{n=1}^{D} n^2 {\chi_{D}}(n),
\end{align*}
where $\zeta(s, a)$ is the Hurwitz zeta function and since ${\chi_{D}}$ is even, we have noted that
\[
\sum_{n=1}^{D} {\chi_{D}}(n)=\sum_{n=1}^{D} n \cdot {\chi_{D}}(n)=0 .
\]
Writing
\[
S({\chi_{D}}) = \sum_{n=1}^{D}n^2 {\chi_{D}}(n),
\]
it suffices to show that $S({\chi_{D}})$ is divisible by $4D$. Since ${\chi_{D}}$ is primitive and real, $D$ must be squarefree. We consider the following cases.\\

\noindent \textbf{Case 1:}  $D$ is prime. We write
\[
S({\chi_{D}}) = \sum_{n=1}^{D}n^2 {\chi_{D}}(n) = \sum_{\substack{1 \leq n \leq D-1 \\ n \equiv \square \bmod D}}n^2-\sum_{\substack{1 \leq n \leq D-1 \\ n \not \equiv \square \bmod D}}n^2 =S_1({\chi_{D}})-S_2({\chi_{D}}). 
\]
Let $g$ be a primitive root of $D$. Then the non-residues are congruent to the odd powers of $g$. Therefore, their squares are congruent to $g^2, g^6, g^{10}$, and so on up to $g^{2q-4}$. Thus we have 
\begin{align}\label{Eq: Congruence}
\left(g^4-1\right) S_2({\chi_{D}}) \equiv g^2(g^{2(D-1)}-1) \bmod D.
\end{align}
The right hand side of \eqref{Eq: Congruence} is congruent to zero by Fermat's Little Theorem. Since $D>5$, we have $g^4-1 \not \equiv 0 \bmod D$. This implies that $S_2({\chi_{D}}) \equiv 0 \bmod D$. Since $S_1({\chi_{D}})+S_2({\chi_{D}})$ is the sum of squares up to $D-1$, we also have $S_1({\chi_{D}})+S_2({\chi_{D}}) \equiv 0 \bmod D$. It follows that $S({\chi_{D}}) \equiv 0 \bmod D$.
\par
To check divisibility by $4$, note that since $-1$ is a quadratic residue modulo $D$, $S_1({\chi_{D}})$ is odd if $D \equiv 5 \bmod 8$ and even if $D \equiv1 \bmod 8$. The claim is now immediate by writing
\[ S_1({\chi_{D}})-S_2({\chi_{D}}) = 2S_1({\chi_{D}}) - \frac{(D-1)D(2D-1)}{6},
\]
and carrying out a parity check.\\

\noindent \textbf{Case 2:}  $D$ is squarefree of the form $D = \prod_{i=1}^{k}p_i,$ for $p_i$ prime and $k \geq 2$. We only consider the case when $k=2$. The argument is similar for $k>2$.
\par
We write $D=p_1p_2$. By the Chinese Remainder Theorem, we can break $S({\chi_{D}})$ as
\begin{align}\label{Eq: CRT}
S({\chi_{D}}) &= \bigg ( \sum_{\substack{1 \leq n_1 \leq p_1 \\ n_1 \equiv \square \bmod p_1}} n_1^2 \bigg) \bigg( \sum_{\substack{1 \leq n_2 \leq p_2 \\ n_2 \equiv \square \bmod p_2}} n_2^2 \bigg) -  \bigg ( \sum_{\substack{1 \leq n_1 \leq p_1 \\ n_1 \not\equiv \square \bmod p_1}} n_1^2 \bigg) \bigg( \sum_{\substack{1 \leq n_2 \leq p_2 \\ n_2 \equiv \square \bmod p_2}} n_2^2 \bigg) \notag \\
&\quad - \bigg ( \sum_{\substack{1 \leq n_1 \leq p_1 \\ n_1 \equiv \square \bmod p_1}} n_1^2 \bigg) \bigg( \sum_{\substack{1 \leq n_2 \leq p_2 \\ n_2 \not \equiv \square \bmod p_2}} n_2^2 \bigg)+ \bigg ( \sum_{\substack{1 \leq n_1 \leq p_1 \\ n_1 \not \equiv \square \bmod p_1}} n_1^2 \bigg) \bigg( \sum_{\substack{1 \leq n_2 \leq p_2 \\ n_2 \not \equiv \square \bmod p_2}} n_2^2 \bigg).
\end{align} 
By Case 1, each term on the right hand side of \eqref{Eq: CRT} is divisible by both $p_1$ and $p_2$, and hence by $D$. It follows that $S({\chi_{D}}) \equiv 0 \bmod D$. As for divisibility by $4$, observe that by Case 1, the sum of the first two terms on the right-hand side of \eqref{Eq: CRT} is congruent to $0 \bmod 4$. Likewise, the sum of the last two terms of \eqref{Eq: CRT} is also congruent to $0 \bmod 4$.
\end{proof}

By \eqref{Defn: eta5}, \eqref{Defn: etaD} and Lemma \ref{lem: Value at s=-1}, we see that for $z \in \mathfrak{h}$,
\begin{align}\label{Eta_q definition rewritten}
\eta_{D}(z) =e^{-\frac{\pi i L(-1,{\chi_D})}{\sqrt{D}}\cdot z} \cdot \Phi_{D}(z) \Phi_{D}^\#(z).
\end{align} 
Using Theorem \ref{Thm : Modular Relation}, we have the relations
\begin{align}
    e^{-\frac{\pi i L(-1,{\chi_D})}{\sqrt{D}}\cdot z} \cdot  \Phi_{D}(z) &=e^{-L'(0,{\chi_{D}})} \cdot \Phi^\#_{D}(-1/z) \\
\textrm{and} \quad \Phi_{D}^\#(z) &=e^ {L'(0,{\chi_{D}})} \cdot e^{-\frac{\pi i L(-1,{\chi_D})}{\sqrt{D}}\cdot\frac{-1}{z}} \cdot \Phi_{D}(-1/z).
\end{align}
Multiplying the two relations above, we obtain
\begin{equation} \label{ModularRelation General} 
e^{-\frac{\pi i L(-1,{\chi_D})}{\sqrt{D}}\cdot z} \cdot \Phi_{D}(z) \Phi_{D}^\#(z)\; = e^{-\frac{\pi i L(-1,{\chi_D})}{\sqrt{D}}\cdot\frac{-1}{z}} 
 \cdot \Phi_{D}(-1/z) \Phi^\#_{D}(-1/z).  
\end{equation}
\subsection{Proof of Theorem \ref{Theorem: Delta5isModular}} Since $z \in \mathfrak{h}$, the products in \eqref{Defn: eta5} are convergent, and thus $\eta_5(z)$ is holomorphic over $\mathfrak{h}$. As mentioned in Remark \ref{Rem: eta5}, we will show that for every
$\gamma \in H\big(\sqrt{5}\big)$ we have $$\eta_5\left(\gamma z \right) = u_\gamma\cdot \eta_5(z)$$ where $u_\gamma$ is a certain fifth root of unity. In fact, we shall precisely compute $u_{\gamma}$.

\par
By \eqref{Eta_q definition rewritten} and \eqref{ModularRelation General}, we have
\begin{align}
    \eta_5(z+\sqrt{5}) &= e^{\frac{2\pi i}{5}} \eta_5(z) \label{Eq: Relation 1, q=5}\\
    \textrm{and} \quad \eta_5(-1/z) &= \eta_5(z). \label{Eq: Relation 2, q=5}
\end{align}
Choose any element of $H(\sqrt{5})$. It can be expressed in the form
\begin{equation*} 
\alpha(k_1,k_2, \ldots,k_\ell) := T_{\sqrt{5}}^{k_1} \cdot S \cdot T_{\sqrt{5}}^{k_2} \cdot  S  \cdots  T_{\sqrt{5}}^{k_{\ell-1}}\cdot S \cdot T_{\sqrt{5}}^{k_\ell}
\end{equation*}
with $k_1, k_2, \ldots,k_\ell\in\mathbb Z$, where
$$T_{\sqrt{5}}:= \begin{pmatrix} 1&\sqrt{5} \\0&1\end{pmatrix}\quad \textrm{and} \quad S = \begin{pmatrix} 0&-1\\1&\phantom{-}0\end{pmatrix}.$$
We consider the following two cases.\\

\noindent \textbf{Case 1 :} $\ell$ is even. In this case, it is straightforward to check that the matrix $\gamma$ is of the form
\[
\gamma = \begin{pmatrix} a\sqrt{5}&b\\
c  & d\,\sqrt{5}\end{pmatrix},
\]
where $a,b,c,d \in \mathbb{Z}$. We claim that 
\begin{align}\label{Gamma Claim}
u_{\gamma} = e^{\frac{2 \pi i c(a+d)}{5}}.
\end{align}
To prove this, we proceed by induction. First, by \eqref{Eq: Relation 1, q=5} and \eqref{Eq: Relation 2, q=5}, we have
\begin{equation} \label{ExplicitModularityRelation}
\eta_5\Big(\alpha(k_1,\ldots,k_\ell)\, z\big) = e^{{\frac{2\pi i(k_1+k_2+\cdots+k_\ell)}{5}} }\cdot \eta_5(z).
\end{equation}
For the base case, note that an element of the form $\alpha(k_1,\ldots,k_\ell)$ with $\ell$ even and the smallest  number of $S$-terms is $$\alpha(k_1,k_2) = T_{\sqrt{5}}^{k_1} \cdot S \cdot T_{\sqrt{5}}^{k_2} = \begin{pmatrix}k_1\sqrt{5}& -1+5k_1k_2\\
1& k_2  \sqrt{5} \end{pmatrix}.$$
One immediately checks using \eqref{ExplicitModularityRelation} that \eqref{Gamma Claim} holds in this case. We additionally note the congruence conditions $bc\equiv -1\bmod{5} $ and $c^2 \equiv 1 \bmod 5$, which we shall use in our induction step.

Let 
$$
\alpha(k_1,k_2, \ldots, k_{2n}) = \begin{pmatrix} a\sqrt{5}&b\\
c  & d\,\sqrt{5}\end{pmatrix}$$
where $a,b,c,d\in\mathbb Z$ and the congruence conditions $bc\equiv -1\bmod{5} $ and $c^2 \equiv 1 \bmod 5$ holds. Assume that \eqref{Gamma Claim} holds for $\alpha(k_1, k_2, \ldots, k_{2n})$. We now show that this assumption implies \eqref{Gamma Claim} holds for $\alpha(k_1, k_2, \ldots, k_{2n+2})$. Note that
$$S \cdot T_{\sqrt{5}}^{k_{2n+1}} \cdot S \cdot T_{\sqrt{5}}^{k_{2n+2}}=
\begin{pmatrix}-1 & -k_{2n+2}\,\sqrt{5}\\
k_{2n+1}\,\sqrt{5}&\;\; -1+5k_{2n+1}k_{2n+2} \end{pmatrix}.$$
Then
\begin{align*}
\alpha(k_1, k_2, \ldots, k_{2n+2}) &=  \begin{pmatrix} a\sqrt{5}&b\\
c & d\,\sqrt{5}\end{pmatrix} \cdot \begin{pmatrix}-1 & -k_{2n+2}\,\sqrt{5}\\
k_{2n+1}\,\sqrt{5}&\;\; -1+5k_{2n+1}k_{2n+2} \end{pmatrix}= \begin{pmatrix} a'\sqrt{5}&b'\\
c'  & d'\,\sqrt{5}\end{pmatrix},
\end{align*}
where we explicitly compute that
\begin{align*} & a' \equiv 4a + b  k_{2n+1}   \bmod{5}\\
& b'\equiv 4b    \bmod{5}\\
& c'\equiv 4c    \bmod{5}\\
& d' \equiv 4d+4c \,k_{2n+2} \bmod{5}.
\end{align*}
It follows (using the fact that $bc\equiv-1\bmod{5}$ and $c^2\equiv 1\bmod{5}$) that 
\begin{align*}
c'(a'+d') &\equiv ac + cd +4bc\,k_{2n+1} + c^2\,k_{2n+2}  \bmod{5}\\
& \equiv c(a+d) + k_{2n+1} + k_{2n+2}  \bmod{5}.
\end{align*}
It follows using \eqref{ExplicitModularityRelation} that \eqref{Gamma Claim} holds for $\alpha(k_1, k_2, \ldots, k_{2n+2})$. The desired claim follows.\\

\noindent \textbf{Case 2 :} $\ell$ is odd. In this case, the matrix $\gamma$ is of the form
\[
\gamma = \begin{pmatrix} a&b\sqrt{5}\\
c\sqrt{5}  & d\,\end{pmatrix},
\]
where $a,b,c,d \in \mathbb{Z}$. Then we have $$
S\cdot \gamma = \begin{pmatrix}   0&-1\\1&0
\end{pmatrix} \begin{pmatrix} a&b\,\sqrt{5}\\
c \,\sqrt{5} & d\end{pmatrix} = \begin{pmatrix} -c\,\sqrt{5}&-d\\
a  & b\,\sqrt{5}\end{pmatrix}.$$
By \eqref{Eq: Relation 2, q=5} and Case 1, we have
\[
\eta_5(\gamma z) = \eta_5(S\gamma z) = u_{S\gamma} \cdot \eta_5(z) =e^{\frac{2\pi i a(b-c)}{5}}  \eta_5(z).
\]
Therefore in this case, one obtains
\begin{align}\label{Gamma Claim 2}
u_{\gamma} = e^{\frac{2 \pi i a(b-c)}{5}}.
\end{align}
\par
Combining \eqref{Gamma Claim} and \eqref{Gamma Claim 2}, we have for any $\gamma \in H(\sqrt{5})$,
\[
\eta_5(\gamma z) = u_{\gamma} \cdot \eta_5(z), 
\]
where $u_{\gamma}$ is a certain fifth root of unity. Consequently, \( \Delta_5(z) = \eta_5(z)^5 \) is invariant under the action of \( H(\sqrt{5}) \), which establishes the modularity of \( \Delta_5 \) as stated in Theorem~\ref{Theorem: Delta5isModular}. Moreover from the definition in \eqref{Defn: eta5}, it follows that \( \Delta_5(z) \) vanishes at the cusp $\infty$, that is, $\lim_{y \to \infty} \Delta_5(iy) = 0.$ \qed

\subsection{Proof of Theorem \ref{Theorem: D>5}: The Case $D>5$}
Again since $z \in \mathfrak{h}$, the products in \eqref{Defn: etaD} are convergent, and thus $\eta_{D}(z)$ is holomorphic over $\mathfrak{h}$. For $D>5$, by \eqref{Eta_q definition rewritten} and Lemma \ref{lem: Value at s=-1}, we have
\begin{align}
\eta_{D}(z+\sqrt{D}) =e^{-\pi i L(-1,{\chi_{D}})} \cdot \eta_{D}(z) = \eta_{D}(z).
\end{align}
From \eqref{ModularRelation General}, we immediately obtain the relations
\begin{align}
    \eta_{D}(z+\sqrt{D}) &= \eta_{D}(z) \label{Eq: Relation 1, q>5}\\
    \textrm{and} \quad \eta_{D}(-1/z) &= \eta_{D}(z). \label{Eq: Relation 2, q>5}
\end{align}
Since $H(\sqrt{D})$ generated by 
$$T_{\sqrt{D}}= \begin{pmatrix}1&\sqrt{D}\\0&1\end{pmatrix}, \quad \textrm{and} \quad S = \begin{pmatrix}0&-1\\1&0\end{pmatrix},$$
we see that \( \eta_{D}(z) \) is invariant under the action of \( H(\sqrt{D}) \). This establishes the modularity of \( \eta_{D} \) as stated in Theorem~\ref{Theorem: D>5}. Moreover from the definition in \eqref{Defn: etaD}, it follows that \( \eta_{D}(z) \) vanishes at the cusp $\infty$, that is, $\lim_{y \to \infty} \eta_{D}(iy) = 0.$\qed

\section{Asymptotics for Fourier Coefficients: Part I} \label{Sec: Main Theorem Proof I} In this section, we establish some preliminary lemmas towards the proof of Theorem \ref{gen thm}. We aim to apply Cauchy's formula in the form
\begin{align}\label{Eq: Cauchy Formula}
a_D(N)=\frac{1}{2\pi i}\int_{|q|=r}G(q)\, \frac{dq}{q^{N+1}}
\end{align}
for some $0<r<1$ where
\begin{align}\label{Eq: Product Formula}
G(q) = \prod_{n=1}^\infty \bigg(\left(1-q^n   \right)^{{\chi_{D}}(n)}  \prod_{a=1}^D \left(1-  \zeta_D^a q^n  \right)^{{\chi_{D}}(a)}\bigg), \quad  q = e^{\frac{2\pi i z}{\sqrt{D}}},
\end{align}
and isolate the integral in \eqref{Eq: Cauchy Formula} around the dominant singularities of $G(q)$. Our goal is to find the location of these singularities and give a precise description of $G(q)$ around these points.   
  
\begin{lem}\label{lem 1} For $|q|<1$, we have 
\begin{equation}\label{log F}
\log G(q)
=
-\sum_{m\geq 1}\frac{g(q^m)}{m(1-q^{Dm})}
-\sqrt{D}\sum_{m\geq 1}\frac{\chi_D(m)}{m}\frac{q^m}{1-q^m}
\end{equation}
where $
g(x)=\sum_{a=1}^{D-1}\chi_D(a)x^a$
is the Fekete polynomial corresponding to the character $\chi_D$.
\end{lem}
\begin{proof} From the product formula \eqref{Eq: Product Formula}, we have 
  \begin{align*}
 \log G(q)
&=
  \sum_{n\geq 1} \bigg(\chi_D(n)\log(1-q^n)+\sum_{a=1}^D \chi_D(a)\log(1-\zeta_D^a q^n)\bigg)
  \\
&=
    -\sum_{n\geq 1} \bigg(\chi_D(n)\sum_{m\geq 1}\frac{q^{nm}}{m}+\sum_{a=1}^D \chi_D(a)\sum_{m\geq 1}\frac{\zeta_D^{am}q^{nm}}{m}\bigg).
  \end{align*}
Since $\chi_D$ is a primitive real character, $\sum_{a=1}^D \chi(a)\zeta_D^{am}=\chi_D(m)\tau(\chi_D)$ for all $m$ where $\tau(\chi_D)$ is the Gauss sum. Thus the above is 
   \begin{equation}\label{log prod}
    -\sum_{n\geq 1} \bigg(\chi_D(n)\sum_{m\geq 1}\frac{q^{nm}}{m}+\tau(\chi_D)\sum_{m\geq 1}\frac{\chi_D(m)q^{nm}}{m}\bigg).
     \end{equation} 
Since $\tau(\chi_D)=\sqrt{D}$, evaluating the sum over $n$ gives us the desired result.
\end{proof}
Recall that in the statement of Theorem \ref{gen thm}, we set $c_D =\sqrt{D}L(2,\chi_D)/\mf{n}_D^2$. The dominant singularity of $G(q)$ will roughly be of size $\exp(c_D/(1-|q|))$. We now give a minor arc estimate demonstrating that when $q$ is sufficiently far away from a low root of unity, $G(q)$ is substantially smaller than this. 

\begin{lem}\label{crude bound lem}
For $A>0$ sufficiently large in terms of $D$, we have  
\begin{align}\label{Eq: Low Roots}
\bigg|\log G(q)+\sum_{m\leq A}\frac{g(q^m)}{m(1-q^{Dm})}
+\tau(\chi_D)\sum_{m\leq A}\frac{\chi_D(m)}{m}\frac{q^m}{1-q^m}\bigg|\leq \frac14\cdot\frac{c_D}{(1-|q|)}.
\end{align}
In particular, there exists $B>0$ depending at most on $A$ and $D$ such that if $q$ satisfies $|1-q^m|\geq B(1-|q|)$ for all $1 \leq m \leq AD$, then
\begin{align}\label{Eq: Low Roots II}
|\log G(q)|\leq\frac12\cdot\frac{c_D}{(1-|q|)}. 
\end{align}
\end{lem}
\begin{proof}
The left hand side of \eqref{Eq: Low Roots} is less than 
\begin{align}\label{Eq: Truncation}
    \bigg \lvert \sum_{m> A}\frac{g(q^m)}{m(1-q^{Dm})}
+\tau(\chi_D)\sum_{m > A}\frac{\chi_D(m)}{m}\frac{q^m}{1-q^m}\bigg \rvert \leq (D+\sqrt{D})\sum_{m> A}\frac{|q|^m}{m(1-|q|^m)}.
\end{align}
Since 
\begin{align*}
1-|q|^m
= &
(1-|q|)(1+|q|+\cdots+|q|^{m-1})
\geq 
(1-|q|)m |q|^m
\end{align*}
for $|q|\leq 1$, \eqref{Eq: Truncation} is
\[
\leq \frac{2D}{1-|q|}\sum_{m>A} \frac{1}{m^2}< \frac{ 2D}{1-|q|}\int_A^\infty \frac{dx}{x^2}
=
\frac{2D}{A(1-|q|)}.
\]
Taking $A$ sufficiently large depending on $D$, the bound in \eqref{Eq: Low Roots} follows. On taking $B$ sufficiently large depending on $A$ and $D$, the bound in \eqref{Eq: Low Roots II} follows immediately by the triangle inequality. 
\end{proof}

To set ideas, we note that we will eventually take $|q|=r\sim1-\sqrt{c_D/N}$ in \eqref{Eq: Cauchy Formula}. For such $q$, the condition $|1-q^m|\leq B(1-|q|)$ then necessitates that $\arg q=2\pi u/v+\theta$ for some $1 \leq u \leq v\leq m$ and $|\theta|\ll 1/\sqrt{N}\ll 1-r$. 
Our next proposition gives a more precise description of $G(q)$ around these remaining low roots of unity. 
Our results involve the periodic zeta function defined for $\alpha\in\mb{R}$ by
\[
P(s,\alpha)=\sum_{n\geq 1}\frac{e(\alpha n)}{n^s}
\]
when $\operatorname{Re}(s)>1$ (or $\operatorname{Re}(s)>0$ when $\alpha\in\mb{R}\backslash\mb{Z}$) and extended to all of $\mb{C}$ by analytic continuation. If $\alpha\in\mb{Z},$ we have $P(s,\alpha)=\zeta(s)$; otherwise it is entire. In either case, it is of at most polynomial growth in fixed vertical strips. 
\begin{prop}\label{precise lem}
Let $\varepsilon>0$. Fix $u,v \in \mathbb{N}$ satisfying $1 \leq u \leq v$ and $(u,v)=1$. Set $q=re^{2\pi i {u}/{v} +i\theta}$ where $0<r<1$ and $|\theta|\leq B(1-r)$ for some fixed constant $B>0$. If $D\nmid v$, we have 
\begin{equation}\label{log F 2}
\log G(q)
=
-\frac{\chi_D(v)\sqrt{D}L(2,\chi_D)}{v^2(\log(1/r)-i\theta)}
+\gamma_D(u,v)
+\mathcal{O}_{D,B,\varepsilon}(v^{1+\varepsilon}(1-r))
\end{equation}
where 
\begin{align}\label{Eq: Gamma Def}
\gamma_D(u,v)
&=
\chi_D(v) L(1,\chi_D) \frac{(1-v)\sqrt{D}}{2v}
\notag \\
&\quad -
\frac{1}{\phi(v)}\sum_{\psi \bmod v} \sum_{a=1}^{v-1}\overline{\psi}(a)\Big[\sqrt{D}P(0,au/v)L(1,\chi_D\psi)
+
P(1,au/v)L(0,\chi_D\psi)\Big].
\end{align}

If $D|v$, we have 
\begin{equation}\label{log F 3}
\log G(q)
=
-\frac{\chi_D(u)D^{3/2}L(2,\chi_D)}{v^2(\log(1/r)-i\theta)}
+a_D(u,v)\log(\log(1/r)-i\theta)+b_D(u,v)+\mathcal{O}_{D,B,\varepsilon}(v^{1+\varepsilon}(1-r))
\end{equation}
for some computable constants $a_D(u,v),b_D(u,v)$ depending only on $u,v$ and $D$. 
\end{prop}

\begin{proof}
For notational simplicity, let us replace $q$ by $qe(u/v)$ where we set $q=re^{i\theta}$ and $|\theta|\leq B(1-r)$. Recall from the proof of Lemma \ref{lem 1} that we may write
\begin{equation}\label{log F twist}
\log G(qe(u/v))=-S_1-\sqrt{D}S_2
\end{equation}
with 
\[
S_1
=
\sum_{m,n\geq 1}\frac{\chi_D(n)e(mnu/v)q^{mn}}{m}
\qquad \textrm{and} \qquad
S_2
=
\sum_{m,n\geq 1}\frac{\chi_D(m)e(mnu/v)q^{mn}}{m}.
\]
We break our proof into the following parts.\\

 \noindent \textbf{$S_2$ Analysis.} We look at $S_2$ first and consider the sum over $m$ modulo $v$. Write $S_2=S_{21}+S_{22}$ with 
 \[
 S_{21}
 =
 \frac{\chi_D(v)}{v}\sum_{m,n\geq 1}\frac{\chi_D(m)}{m}q^{vmn},
 \qquad \textrm{and} \qquad
 S_{22}
 =
 \sum_{a=1}^{v-1}\sum_{\substack{m,n\geq 1 \\ m\equiv a \bmod v}}
 \frac{\chi_D(m)e(anu/v)q^{mn}}{m}.
 \]

Applying the identity
\begin{equation}\label{mellin}
\qquad e^{-z}=\frac{1}{2\pi i }\int_{c-i\infty}^{c+i\infty}\Gamma(s)z^{-s}\, ds, \qquad c>0,
\end{equation}
which is valid for $\operatorname{Re}(z)>0$, we have
\begin{align*}
S_{21} &=
\frac{\chi_D(v)}{v}
\sum_{m,n\geq 1}\frac{\chi_D(m)}{m}
\frac{1}{2\pi i }\int_{2-i\infty}^{2+i\infty}\Gamma(s)(vmn\log(1/q))^{-s} \, ds
\\
&=
\frac{\chi_D(v)}{v}
\frac{1}{2\pi i }\int_{2-i\infty}^{2+i\infty}
\Gamma(s)\zeta(s)L(1+s,\chi_D)(v\log(1/q))^{-s}\, ds.
\end{align*}
The integrand has simple poles at $s=1,0,-1$ and so we would like to shift the integral past these points. 

When estimating the integral in the left half-plane note that since $|\theta|\leq B(1-r)$, 
\[
|\arg \log(1/q)|=\arctan(|\theta|/\log(1/r))\leq \tfrac{\pi}{2}-\delta
\] 
for some fixed $\delta=\delta(B)\asymp \min (1,B^{-1})$. In particular, by Stirling's formula we have 
\[
\Gamma(s)(\log(1/q))^{-s}\ll |\log(1/q)|^{-\operatorname{Re}(s)} e^{-\delta \operatorname{Im}(s)/2}, \quad \textrm{as} \quad \operatorname{Im}(s)\to\infty.
\]
Hence the integral over the new line is absolutely convergent and $\ll_{D,B} |\log(1/q)|^{-\operatorname{Re}(s)}$ since $\zeta(s)$ and $L(1+s,\chi_D)$  are of polynomial growth at most.
Thus shifting to $\operatorname{Re}(s)=-3/2$, say, we find that 
\begin{align*}
S_{21}
= &
\frac{\chi_D(v)L(2,\chi_D)}{v^2\log(1/q)}
-\frac{\chi_D(v)}{2v}
L(1,\chi_D)
+\mathcal{O}_{D,B}(v^{1/2}|1-q|)
\end{align*}
on recalling that $\zeta(0)=-1/2$.

By the orthogonality of Dirichlet characters, 
\[
S_{22}
=
\frac{1}{\phi(v)}\sum_{\psi \bmod v} \sum_{a=1}^{v-1}\overline{\psi}(a)\sum_{\substack{m,n\geq 1}}
 \frac{\chi_D(m)\psi(m)e(anu/v)q^{mn}}{m}.
\]
On applying \eqref{mellin}, this is equal to
\begin{align}\label{Eq: Double Pole}
\frac{1}{\phi(v)}\sum_{\psi \bmod v} \sum_{a=1}^{v-1}\overline{\psi}(a)
\frac{1}{2\pi i }\int_{2-i\infty}^{2+i\infty}
\Gamma(s)P(s,au/v)L(1+s,\chi_D\psi)(\log(1/q))^{-s}\, ds
\end{align}
where $P(s,\alpha)$ is the periodic zeta function. Since $au/v\in \mb{Q}\backslash\mb{Z}$, on shifting to the line $\operatorname{Re}(s)=-3/2$, we only encounter poles at $s=0,-1$. If $\chi_D\psi$ is principal, we have a double pole at $s=0$; otherwise it is simple. Note that if $\chi_D\psi$ is principal, we must have $D \mid v$. 
Distinguishing between the two cases and accounting for the double pole when $D|v$, we have
\begin{align*}
S_{22}& =
\frac{\mathbbm{1}_{D\nmid v}}{\phi(v)}\sum_{\psi \bmod v} \sum_{a=1}^{v-1}\overline{\psi}(a)P(0,au/v)L(1,\chi_D\psi)
\\
&\quad +
\mathbbm{1}_{D| v}(a_D(u,v)\log\log(1/q)+b_D(u,v))
+
\mathcal{O}_{D,B}(v|1-q|),
\end{align*}
for some computable constants $a_D(u,v),b_D(u,v)$ depending only on $u,v$ and $D$. In total, we  find that 
\begin{align}\label{S2}
S_2
&=
\mathbbm{1}_{D\nmid v} \bigg(\frac{\chi_D(v)L(2,\chi_D)}{v^2\log(1/q)}
-\frac{\chi_D(v)}{2v}
L(1,\chi_D)
+
h_D(u,v)\bigg) \notag\\
&\quad +
\mathbbm{1}_{D| v}(a_D(u,v)\log\log(1/q)+b_D(u,v))
+\mathcal{O}_B(v|1-q|),
\end{align}
where 
\begin{align*}
h_D(u,v)
= &
\frac{1}{\phi(v)}\sum_{\psi \bmod v} \sum_{a=1}^{v-1}\overline{\psi}(a)P(0,au/v)L(1,\chi_D\psi).
\end{align*}

\noindent \textbf{$S_1$ Analysis : The Case $D \nmid v$.} We consider the sum over $n$ modulo $v$ in $S_1$ and write $S_1=S_{11}+S_{12}$ with 
\[
S_{11}
=
\chi_D(v)\sum_{m,n\geq 1}\frac{\chi_D(n)q^{vmn}}{m}
,\qquad \textrm{and} \qquad
S_{12}
=
\sum_{a=1}^{v-1}\sum_{\substack{m,n\geq 1\\n\equiv a \bmod v}}\frac{\chi_D(n)e(amu/v)q^{mn}}{m}.
\] 
By \eqref{mellin}, we have
\[
S_{11}
=
\frac{\chi_D(v)}{2\pi i }\int_{2-i\infty}^{2+i\infty}
\Gamma(s)\zeta(1+s)L(s,\chi_D)(v\log(1/q))^{-s}\, ds.
\]
By the functional equation \eqref{FunctionalEquationForL(s,chi)}, $L(0,\chi_D)=0$ since $\chi_D$ is even. Thus the pole of the integrand at $s=0$ is simple. Shifting to $\operatorname{Re}(s)=-1-\varepsilon$ say, we see that
\begin{align}
 S_{11} & = \chi_D(v)L'(0,\chi_D)+\mathcal{O}_{B, \varepsilon}(v^{1+\varepsilon}|1-q|) \notag \\
 &=
  \frac{ \chi_D(v)\sqrt{D}}{2}L(1,\chi_D)+\mathcal{O}_{D,B, \varepsilon}(v^{1+\varepsilon}|1-q|), \label{S11}
 \end{align}
after another application of the functional equation.

For $S_{12}$, by orthogonality of Dirichlet characters, we have 
\begin{align*}
S_{12}& =
\frac{1}{\phi(v)}\sum_{\psi \bmod v}\sum_{a=1}^{v-1}\ol{\psi}(a)
\sum_{\substack{m,n\geq 1}}\frac{\chi_D(n)\psi(n)e(amu/v)q^{mn}}{m}
\\
& =
\frac{1}{\phi(v)}\sum_{\psi \bmod v}\sum_{a=1}^{v-1} 
\frac{\ol{\psi}(a)}{2\pi i}\int_{2-i\infty}^{2+i\infty}
\Gamma(s)P(1+s,au/v)L(s,\chi_D\psi)(\log(1/q))^{-s}\, ds.
\end{align*}
As noted before, the character $\chi_D\psi$ can only be principal if $D|v$ which we are currently assuming is not the case. Hence we only have simple poles at $s=0,-1$ and on shifting contours,  
we see
\begin{equation}\label{S12}
S_{12}
=
\frac{\mathbbm{1}_{D\nmid v}}{\phi(v)}\sum_{\psi \bmod v}\sum_{a=1}^{v-1} 
{\ol{\psi}(a)}P(1,au/v)L(0,\chi_D\psi)
+ \mathcal{O}_{D,B}(v|1-q|).
\end{equation}


\noindent \textbf{$S_1$ Analysis : The Case $D \mid v$.} Here we use a different decomposition of the sum. Let $v=Dv'$. We write
\[
S_1
=
\sum_{a=1}^{D-1}\chi_D(a)\sum_{\substack{m,n\geq 1\\n\equiv a \bmod D}}\frac{e(umn/v)q^{mn}}{m}
=
\sum_{a=1}^{D-1}\chi_D(a)\sum_{\substack{n\geq 0,m\geq 1}}\frac{e(aum/v)e(umn/v')q^{m(a+Dn)}}{m}.
\]
Now we consider the sum over $m$ modulo $v'$ giving 
\[
S_1=S_{11}'+S_{12}'
\]
with 
\[
S_{11}'
=
\frac{1}{v'}\sum_{a=1}^{D-1}\chi_D(a)\sum_{\substack{m\geq 1}}\frac{e(aum/D)}{m}\sum_{n\geq 0}q^{v'm(a+Dn)}
\]
and
\[
S_{12}'
=
\sum_{a=1}^{D-1}\chi_D(a)\sum_{b=1}^{v'-1}\sum_{\substack{m\geq 1\\m\equiv b \bmod v'}}\frac{e(aum/v)}{m}\sum_{n\geq 0}e(ubn/v')q^{m(a+Dn)}.
\]
Applying Mellin inversion and writing $v'(a+Dn)=v(n+a/D)$, we have 
\[
S_{11}'
=
\frac{1}{v'}\sum_{a=1}^{D-1}\chi_D(a)
\frac{1}{2\pi i }\int_{2-i\infty}^{2+i\infty}
\Gamma(s)P(1+s, au/D)\zeta(s,a/D)(v\log(1/q))^{-s}\, ds
\]
where $\zeta(s,\alpha)=\sum_{n\geq 0}(n+\alpha)^{-s}$ is the Hurwitz zeta function. This can be continued to a meromorphic function on $\mb{C}$ with only a simple pole at $s=1$ with residue $1$, and with at most polynomial growth in fixed vertical strips. Since $au/D$ is not integral, the periodic zeta function in the integrand is entire and so the poles at $s=1,0,-1$ are all simple. Shifting contours to $\operatorname{Re}(s)=-1-\varepsilon$ gives
\begin{align} \label{S11' Contribution}
S_{11}'
&=
\frac{D}{v^2\log(1/q)}\sum_{a=1}^D\chi_D(a)P(2,au/D) \notag \\
&\quad +
\sum_{a=1}^D\chi_D(a)P(1,au/D)\zeta(0,a/D)
+\mathcal{O}_{D,B, \varepsilon}(v^{\varepsilon}|1-q|)
\end{align} 
since $vv'=v^2/D$.
The constant in the leading term here, that is, the sum over $a$ involving $P(2,au/D)$ can be evaluated using Gauss sums as follows: 
\begin{align*}
\sum_{a=1}^D\chi_D(a)P(2,au/D) &=\sum_{n\geq 1}\frac{1}{n^2}\sum_{a=1}^D\chi_D(a)e(aun/D)\\
&=
\tau(\chi_D)\chi_D(u)\sum_{n\geq 1}\frac{\chi_D(n)}{n^2}
=
\sqrt{D}\chi_D(u)L(2,\chi_D).
\end{align*}

For $S_{12}'$, \eqref{mellin} gives 
\[
\sum_{a=1}^{D-1}\chi_D(a)\sum_{b=1}^{v'-1}
\frac{1}{2\pi i }\int_{2-i\infty}^{2+i\infty}
\Gamma(s)\bigg(\sum_{\substack{m\geq 1\\m\equiv b \bmod v'}}\frac{e(aum/v)}{m^{1+s}}\bigg)\bigg(\sum_{n\geq 0}\frac{e(ubn/v')}{(n+a/D)^s}\bigg)(D\log(1/q))^{-s}\, ds.
\]
The Dirichlet series in the integrand here are given by Lerch zeta functions. Since neither $au/v$ or $ub/v'$ are integral, these are entire functions and we only pick up poles at $s=0,-1$. Therefore, we obtain
\begin{align}
\label{S12' Contribution}
S_{12}' = c_D(u,v) +\mathcal{O}_{D,B, \varepsilon}(v^{\varepsilon} \lvert 1-q \rvert),
\end{align}
for some computable constant $c_D(u,v)$ depending only on $u,v$ and $D$.\\

\noindent \textbf{Conclusion.} We now put together the cases $D|v$ and $D\nmid v$ for $S_1$. Combining \eqref{S11}--\eqref{S12' Contribution}, we have
\begin{align}\label{S1 Final}
S_1
&=
  \mathbbm{1}_{D\nmid v} \bigg(\frac{ \chi_D(v)\sqrt{D}}{2}L(1,\chi_D)+h_D'(u,v)\bigg)
\notag \\
&\quad +
\mathbbm{1}_{D|v}\bigg(\frac{\chi_D(u)D^{3/2}L(2,\chi_D)}{v^2\log(1/q)}+c_D'(u,v)\bigg)+\mathcal{O}_{D,B, \varepsilon}(v^{1+\varepsilon}|1-q|)
\end{align}
for some computable constant $c_D'(u,v)$ where 
\[
h_D'(u,v)
=
\frac{1}{\phi(v)}\sum_{\psi \bmod v}\sum_{a=1}^{v-1} 
{\ol{\psi}(a)}P(1,au/v)L(0,\chi_D\psi).
\] 
For clarity, we mention that the second term on the right hand side of \eqref{S11' Contribution} has been absorbed into $c_D'(u,v)$. Finally, we note that in our range of $\theta$, $\lvert 1-q \rvert \asymp_B 1-r.$ Therefore, combining \eqref{S1 Final} with \eqref{S2} in \eqref{log F twist} and readjusting the constants $a_D(u,v),b_D(u,v)$ accordingly gives the desired result.
\end{proof}

We pause briefly to detail the most dominant singularities. Observe that the constant factors in the leading terms of \eqref{log F 2} and \eqref{log F 3} can be made positive by choosing $u,v$ such that $\chi_D(v)$ or $\chi_D(u)=-1$, respectively. In the case of \eqref{log F 2}, the constant is largest when $v$ is smallest.  In other words, for those $v$ not divisible by $D$, the dominant singularity occurs at the points where $q\to e(u/v)$ and $v$ is the least quadratic non-residue modulo $D$, that is $v=\mf{n}_D$. In this case, we have 
\[
G(q)=\exp\bigg((1+o(1))\frac{\sqrt{D}L(2,\chi_D)}{\mf{n}_D^2(1-e(-u/v)q)}\bigg),\qquad \textrm{as} \quad q\to e(u/v),\,\,\, D\nmid v 
\] 
for all $1\leq u< v, (u,v)=1$. We have noted here that
\[
\log (1/r)-i \theta \sim 1-r e(\theta) =1-e(-u/v)q, \quad \textrm{as} \quad r \to 1 \quad \textrm{and} \quad \theta \to 0. 
\]
For those $v$ divisible by $D$, a similar heuristic shows that  
\[   G(q)=\exp\bigg((1+o(1))\frac{D^{3/2}L(2,\chi_D)}{v^2(1-e(-u/v)q)}\bigg),\qquad \textrm{as} \quad q\to e(u/v),\,\,\, D|v  
\] 
at points for which $\chi_D(u)=-1$. Clearly, the constant here is largest when $v=D$ and thus we see that the singularity for $D\nmid v$ is dominant provided $\mf{n}_D<\sqrt{D}$ which as mentioned in Remark \ref{Rem: Least Quadratic Nonresidue}, we believe to be always the case. All other singularities are sub-dominant, in fact, they are exponentially smaller.

Our final lemma in this section allows us to localize the variable $\theta$ in Proposition \ref{precise lem} 
 yet further (as well as set some notation). This is necessary for an application of the method of stationary phase in Section \ref{Sec: Main Theorem Proof II}.  

\begin{lem}\label{local lem}
Let $0<r<1$, $\theta\in(-\pi,\pi ]$, $N \geq 1$ be fixed and
\[
f(\theta)=\frac{c_D}{ i(\log(1/r)-i\theta)}-N\theta.
\] 
 Then
 \[
\frac{|e^{if(\theta)}|}{e^{c_D/\log(1/r)}}
\leq
\begin{cases}
e^{-\tfrac12c_D\theta^2/\log^3(1/r)},\qquad &|\theta|<\log(1/r)
\\
e^{-\tfrac12c_D/\log(1/r)},\qquad &|\theta|\geq \log(1/r).
\end{cases} 
\]
\end{lem}
\begin{proof}We have 
\begin{align*}
\operatorname{Re}\bigg(\frac{1}{\log(1/r)-{i\theta}}\bigg)-\frac{1}{\log(1/r)}
= &
\frac{\log(1/r)}{\log^2(1/r)+\theta^2}-\frac{1}{\log(1/r)}
\\
= &
-\frac{\theta^2}{\log(1/r)(\log^2(1/r)+\theta^2)}.
\end{align*}
The result then follows on distinguishing the size of $|\theta|$.
\end{proof}

\section{Asymptotics for Fourier Coefficients: Part II} \label{Sec: Main Theorem Proof II}
Our main objective in this section is to prove Theorem \ref{gen thm}.
\subsection{Initial preparations}
Let $N \geq 1$ and $q=re^{i\theta}$ with $\theta\in[0,2\pi)$. We set $r$ such that 
\[
\log(1/r)=\sqrt{\frac{c_D}{N}}
\]  
where we recall that $c_D=\sqrt{D}L(2,\chi_D)/\mf{n}_D^2$.
Note that $r=1-\sqrt{{c_D}/{{N}}}+\mathcal{O}_D(1/N)$ which we may use interchangeably. 

By Cauchy's formula, we have 
\begin{align*}
a_D(N)
= &
\frac{1}{2\pi i }\int_{|q|=r} G(q)\frac{dq}{q^{N+1}}
=
\frac{1}{2\pi r^N}\int_0^{2\pi}G(re^{i\theta})e^{-iN\theta}\, d\theta.
\end{align*}
Assuming that $\mf{n}_D<\sqrt{D}$, on applying Lemma \ref{crude bound lem} and Proposition \ref{precise lem}, along with the remarks after it, we see that the dominant singularity of $G$ occurs around the points $e(u/\mf{n}_D)$, $1\leq u<\mf{n}_D$, $(u,\mf{n}_D)=1$. More precisely, for $B$ sufficiently large in terms of $D$, we have
\begin{align*}
\frac{1}{2\pi r^N}\int_{|\theta-2\pi u/\mf{n}_D|>B/\sqrt{N}}G(re^{i\theta})e^{-iN\theta} \, d\theta
&\ll_D
 \exp\bigg(\frac{c_D-\varepsilon}{\log(1/r)}-N\log r\bigg)
 \\
&\ll_D \exp\bigg((1+o(1))(2-\varepsilon){ \sqrt{c_DN}}\bigg),
\end{align*}
for some fixed $\varepsilon>0$ depending only on $D$. This is exponentially smaller than our target bound $\exp(2\sqrt{c_DN})$.
 
 By the asymptotic \eqref{log F 2} in Proposition \ref{precise lem}, the remaining integrals are 
\begin{align}\label{Eq: Remaining Integral}
\frac{1}{2\pi r^N}&\sum_{\substack{u \bmod \mf{n}_D \\ (u,\mf{n}_D)=1}}\int_{|\theta-2\pi u/\mf{n}_D|\leq B/\sqrt{N}} G(re^{i\theta})e^{-iN\theta}\, d\theta
\notag \\
&=
\frac{1}{2\pi r^N}\sum_{\substack{u \bmod \mf{n}_D \\ (u,\mf{n}_D)=1}}{e^{\gamma_D(u,\mf{n}_D)}e(uN/\mf{n}_D)}\int_{-B/\sqrt{N}}^{B/\sqrt{N}} \exp\bigg(\frac{c_D}{\log(1/r)-i\theta}+\mathcal{O}_D(|1-r|)\bigg)e^{-iN\theta} \, d\theta,
\end{align}
where $\gamma_D(u,\mf{n}_D)$ is as defined in the statement of Proposition \ref{precise lem}. Since $1-r \asymp_{D} 1/\sqrt{N}$, we have $\exp(1-r) = 1 +\mathcal{O}_D(1/\sqrt{N})$. Therefore, the contribution from the error term in \eqref{Eq: Remaining Integral} is
\begin{align}\label{worst error}
&\frac{1}{2\pi r^N}\sum_{\substack{u \bmod \mf{n}_D \\ (u,\mf{n}_D)=1}}{e^{\gamma_D(u,\mf{n}_D)}e(uN/\mf{n}_D)}\int_{-B/\sqrt{N}}^{B/\sqrt{N}} \exp\bigg(\frac{c_D}{\log(1/r)-i\theta}\bigg) \cdot \mathcal{O}_D\bigg(\frac{1}{\sqrt{N}}\bigg) \cdot e^{-iN\theta} \, d\theta \notag \\
&\ll_D \frac{1}{\sqrt{N}}\exp\bigg(\frac{c_D}{\log(1/r)}-N\log r\bigg)\int_{-B/\sqrt{N}}^{B/\sqrt{N}} \, d\theta
\ll_D
\frac{1}{N}\exp\big(2\sqrt{c_DN}\big).
\end{align}
Here we have noted that the real part of $(\log(1/r)-i\theta)^{-1}$ is maximized when $\theta=0$, and the sum over $u \bmod \mf{n}_D$ which involves $\gamma_D(u,\mf{n}_D)$ is $\ll_D 1$ because none of the periodic zeta functions arising from \eqref{Eq: Gamma Def} contain fractions $au/v$ which lie in $\mathbb{Z}$. Therefore, we arrive at the following relation: 
\begin{align}\label{Eq: Fourier Coefficient Formula}
a_D(N)
=
\frac{\mf{S}^{0}(N)}{2\pi r^N}\int_{-B/\sqrt{N}}^{B/\sqrt{N}} e^{if(\theta)} \, d\theta
+
\mathcal{O}_D\bigg(\frac{1}{N}\exp\big(2\sqrt{c_DN}\big)\bigg)
\end{align}
where
\begin{align}\label{Eq: ftheta def}
f(\theta)=\frac{c_D}{ i(\log(1/r)-i\theta)}-N\theta
\end{align}
and 
\begin{align}\label{Eq: Sigma Def}
\mf{S}^{0}(N)= \sum_{\substack{u \bmod \mf{n}_D \\ (u,\mf{n}_D)=1}}{e^{\gamma_{D}(u,\mf{n}_D)}e(uN/\mf{n}_D)}.
\end{align}
Note here that $f(\theta)$ and $\mf{S}^{0}(N)$ both depend on $D$ as well, but we omit it from the notation for brevity.
\subsection{The singular integral}
We now focus on the integral in \eqref{Eq: Fourier Coefficient Formula}, localizing it further. By Lemma \ref{local lem},
\begin{align*}
\frac{1}{2 \pi r^N}\int_{\sqrt{c_D/N}\leq |\theta|\leq B/\sqrt{N}}e^{if(\theta)} \, d\theta
\ll_D N^{-\frac12}\exp\Big(\frac{3}{2}\sqrt{c_DN}\Big),
\end{align*}
which is exponentially smaller than our target bound $\exp(2\sqrt{c_DN})$. Also, we have
\begin{align*}
\frac{1}{2 \pi r^N}&\int_{N^{-3/4}\log N \leq |\theta|\leq \sqrt{c_D/N}}e^{if(\theta)} \, d\theta \\
&\ll_D
\exp\big(2\sqrt{c_DN}\big)
\int_{N^{-3/4}\log N}^\infty e^{-cN^{3/2}\theta^2}\, d\theta \ll_{D,J}
N^{-J} \exp\big(2\sqrt{c_DN}\big),
\end{align*}
for any $J>0$, and therefore this contribution is smaller than our target bound by any arbitrary power of $N$.

The remaining integral is given by 
\begin{align}\label{Eq: Integral}
\frac{1}{2\pi r^N}\int_{-N^{-3/4}\log N}^{N^{-3/4}\log N} e^{if(\theta)} \, d\theta.
\end{align}
In preparation for a Taylor expansion, we note that
\begin{align*}
if(0)&=\frac{c_D}{\log(1/r)},\qquad
f'(0)=  \frac{c_D}{\log^2(1/r)}-N=0, \quad f''(0)= \frac{2ic_D}{\log^3(1/r)} = 2i\frac{N^{3/2}}{\sqrt{c_D}}
\end{align*}
and 
\begin{align*}
f^{(3)}(0)&= -\frac{6c_D}{(\log(1/r))^4} = -\frac{6N^2}{c_D}\ll_D N^2, \quad f^{(4)}(\theta)= -\frac{24 i c_D}{(\log(1/r)-i\theta)^5}\ll_D N^{\frac52},
\end{align*}
for $\theta$ in the range of integration. Hence the integral \eqref{Eq: Integral} is 
\begin{align}\label{Eq: Truncated Gaussian}
\frac{e^{if(0)}}{2\pi r^N}
&\int_{-N^{-3/4}\log N}^{N^{-3/4}\log N}
e^{i\tfrac12 f''(0)\theta^2+i\tfrac16 f^{(3)}(0)\theta^3+\mathcal{O}_D(N^{5/2}\theta^4)} \, d\theta
\notag \\
&=
\frac{1}{2\pi}\exp\big(2\sqrt{c_DN}\big)
\int_{-N^{-3/4}\log N}^{N^{-3/4}\log N}e^{-N^{3/2}\theta^2/\sqrt{c_D}} \notag \\
&\quad \quad \quad\quad\quad\quad\quad\quad\quad \times \bigg (1-i\frac{N^2}{c_D} \theta^3+\mathcal{O}_D(N^{-\frac12}\log^6N)\bigg) \bigg(1+\mathcal{O}_D(N^{-\frac12}\log^4N)\bigg)\, d\theta
\notag \\
&=
\frac{c_D^{1/4}}{2\sqrt{\pi}N^{3/4}}\exp\big(2\sqrt{c_DN}\big)(1+\mathcal{O}_D(N^{-\frac12}\log^6N))
\end{align}
because the integral involving the term $\theta^3$ vanishes due to symmetry. Since the worst error we have so far other than \eqref{Eq: Truncated Gaussian} is from \eqref{worst error} and is of the form $N^{-1}\exp(2\sqrt{c_DN})$, in total then we have 
\begin{equation}\label{a}
a_D(N)
=
\frac{c_D^{1/4}\mf{S}^{0}(N)}{2\sqrt{\pi}N^{3/4}}\exp\big(2\sqrt{c_DN}\big)(1+\mathcal{O}_D(N^{-\frac14})).
\end{equation}
The constant $c_D^{1/4}/2\sqrt{\pi}$ is contained in the expression for $\alpha_D$ in Theorem \ref{gen thm}. Thus it suffices to show that the remaining constants arise from the singular series $\mf{S}^{0}(N)$.

\subsection{The singular series} The asymptotic \eqref{Eq: Key Asymptotic} in Theorem \ref{gen thm} will follow from the following lemma. 

\begin{lem}\label{Lem: Singular Series}
Let $\mf{S}^{0}(N)$ be as defined in \eqref{Eq: Sigma Def}. We have 
\[
\mf{S}^{0}(N)
=
e^{\sqrt{D}L(1,\chi_D)}
\mf{S}_D(N)\]
where $\mf{S}_D(N)=(-1)^N$ when $\n=2,$ and  
\[
\mf{S}_D(N)
=
2\sum_{1\leq u\leq \tfrac{\n-1}{2}}\cos\Big(\frac{2\pi uN}{\n}-\beta_{D}(u,\mf{n}_D)\Big), \quad \quad \n\geq 3,
\]
with $\beta_D(u,\mf{n}_D)$ as defined in \eqref{Beta def}. 
\end{lem}
\begin{proof}
Recall that 
\begin{align}\label{Eq: Sigma0}
\mf{S}^{0}(N)= \sum_{\substack{u \bmod \mf{n}_D \\ (u,\mf{n}_D)=1}}{e^{\gamma_{D}(u,\mf{n}_D)}e(uN/\mf{n}_D)}
\end{align}
where by Proposition \ref{precise lem}, we have
 \begin{align}\label{Eq: Gamma Evaluation}
\gamma_D(u,\mf{n}_D)
&=
-\frac{\sqrt{D} L(1,\chi_D)}{2} \cdot \frac{1-\mf{n}_D}{\mf{n}_D}
\notag \\
&\quad -
\frac{1}{\phi(\mf{n}_D)}\sum_{\psi\bmod \mf{n}_D} \sum_{a=1}^{\mf{n}_D-1}\ol{\psi}(a)\Big[\sqrt{D}P(0,au/\mf{n}_D)L(1,\chi_D\psi)
+
P(1,au/\mf{n}_D)L(0,\chi_D\psi)\Big].
 \end{align} \\
\noindent \textbf{Contribution from the principal character.} Let us isolate the contribution from the principal character $\psi_0$. We use throughout that $\mf{n}_D$ must be prime. Since both $\chi_D$ and $\psi_0$ are even, we have $L(0,\chi_D\psi_0)=0$. Hence the second term in the sum in \eqref{Eq: Gamma Evaluation} disappears.
 For the first term, note that 
 \[
\sum_{a=1}^{\mf{n}_D-1}\psi_0(a)P(s,au/\mf{n}_D)=\sum_{n\geq 1}\frac{1}{n^s}\sum_{a=1}^{\mf{n}_D-1}\psi_0(a)e(aun/\mf{n}_D).
 \]
 Since $\mf{n}_D$ is prime, for those $n$ not divisible by $\mf{n}_D$ the above inner sum is $\sum_{a=1}^{\mf{n}_D-1}e(a/\mf{n}_D)=-1$ since $u$ is also coprime to $\mf{n}_D$. If $n$ is divisible by $\mf{n}_D$, this sum is $\phi(\mf{n}_D)$. Hence, we obtain
 \begin{align}\label{Eq: Principal Character Relation}
    \sum_{a=1}^{\mf{n}_D-1}\psi_0(a)P(s,au/\mf{n}_D)
 =
 -L(s,\psi_0)+\phi(\mf{n}_D)\mf{n}_D^{-s}\zeta(s).
 \end{align}
Thus, the contribution from the principal character to the sum on the right hand side of \eqref{Eq: Gamma Evaluation} is
\[
 -\frac{\sqrt{D}}{\phi(\mf{n}_D)}L(1,\chi_D\psi_0)(-L(0,\psi_0)+\phi(\mf{n}_D)\zeta(0))
 =
 \frac{\sqrt{D}}{2}L(1,\chi_D\psi_0)
 =
\frac{\sqrt{D}}{2}(1+\mf{n}_D^{-1})L(1,\chi_D)   
\] 
since $L(0,\psi_0)=0$, $\zeta(0)=-\frac12$ and $\chi_D(\mf{n}_D)=-1$. Putting together this contribution with the first term in \eqref{Eq: Gamma Evaluation}, we have
\begin{align*}
\gamma_D(u,\mf{n}_D) = \sqrt{D}L(1,\chi_D) 
+ \gamma_D'(u,\mf{n}_D)
\end{align*}
where
\begin{align*}
\gamma_D'(u,\mf{n}_D)=
-\frac{1}{\phi(\mf{n}_D)}\sum_{\substack{\psi \bmod \mf{n}_D \\ \psi \neq \psi_0}} \sum_{a=1}^{\mf{n}_D-1}\ol{\psi}(a)\Big[\sqrt{D}P(0,au/\mf{n}_D)L(1,\chi_D\psi)
+
P(1,au/\mf{n}_D)L(0,\chi_D\psi)\Big].
\end{align*}
It remains to compute the contribution from the non-principal characters. \\

\noindent \textbf{Contribution from the non-principal characters.} Consider $\psi \bmod \mf{n}_D$ with $\psi \neq \psi_0$. Note that $\psi$ is primitive since $\n$ is prime. Thus we have 
\begin{align}\label{Eq: Connecting P with Gauss Sums}
\sum_{a=1}^{\n-1}\ol{\psi}(a)P(s,au/\mf{n}_D)
=
\sum_{n\geq 1}\frac{1}{n^s}\sum_{a=1}^{\n-1}\ol{\psi}(a)e(aun/\n)
=
\psi(u)\tau(\ol{\psi})L(s,\psi).
\end{align}
Hence we obtain
\begin{align}\label{Eq: Gamma Evaluation 1st Step}
\gamma_D'(u,\mf{n}_D)=
-\frac{1}{\phi(\mf{n}_D)}\sum_{\substack{\psi \bmod \mf{n}_D \\ \psi \neq \psi_0}} \psi(u) \tau(\overline{\psi})\Big[\sqrt{D}L(0,\psi)L(1,\chi_D\psi)
+
L(1,\psi)L(0,\chi_D\psi)\Big].
\end{align}
Since $\chi_D(-1)=1$, the parity of $\chi_D\psi$ equals the parity of $\psi$. Since the value at $s=0$ of even Dirichlet $L$-functions is zero, the above sum can be restricted to odd characters. The functional equation for odd characters $\chi \bmod q$ (see \cite[Ch. 5]{IK2004}) reads
\begin{align}\label{General FE}
L(1,\chi)=\frac{\pi\tau(\chi)}{iq}L(0,\ol{\chi}).
\end{align}
Applying this to \eqref{Eq: Gamma Evaluation 1st Step} gives 
\begin{align}\label{Eq: Gamma Evaluation I}
\gamma_D'(u,\mf{n}_D)& =
\frac{\pi i}{\phi(\mf{n}_D)}\sum_{\substack{\psi \bmod \mf{n}_D \\ \psi \neq \psi_0, \,  \psi(-1)=-1}} \psi(u) \tau(\overline{\psi}) \notag \\
&\quad \times \Big[\sqrt{D} \cdot \frac{\tau(\chi_D\psi)}{\n D} \cdot L(0,\psi)L(0,\chi_D\overline{\psi})
+
\frac{\tau(\psi)}{\n} L(0,\overline{\psi})L(0,\chi_D\psi)\Big].
\end{align}
We now use the following fact about Gauss sums: for characters $\chi_1,\chi_2$ of moduli $q_1,q_2$ with $(q_1,q_2)=1$,
\begin{align}\label{Gauss Sum Factoring}
\tau(\chi_1\chi_2)=\chi_1(q_2)\chi_2(q_1)\tau(\chi_1)\tau(\chi_2),
\end{align}
(see \cite[Thm. 9.6]{MV}). Since $(\n,D)=1$, we may apply this to obtain
\begin{align*}
\tau(\chi_D\psi)=\chi_D(\n)\psi(D)\tau(\chi_D)\tau(\psi)=-\sqrt{D} \psi(D)\tau(\psi).
\end{align*}
Also since $\psi$ is odd and primitive, $\tau(\ol{\psi})=\psi(-1)\ol{\tau(\psi)}
=
-
\ol{\tau(\psi)}$, so that $\tau(\ol{\psi})\tau(\psi)=-\n$. Therefore, we arrive at
\begin{align}\label{Eq: Gamma Evaluation II}
\gamma_D'(u,\mf{n}_D)& =
-\frac{\pi i}{\phi(\mf{n}_D)}\sum_{\substack{\psi \bmod \mf{n}_D \\ \psi \neq \psi_0, \,  \psi(-1)=-1}} \psi(u) \Big[
L(0,\overline{\psi})L(0,\chi_D\psi)-\psi(D) L(0,\psi)L(0,\chi_D\overline{\psi})\Big].
\end{align}

We now isolate the real character and pair up the complex conjugate characters. Since $\n$ is prime, there is only one non-principal real character $\psi(\cdot)=(\tfrac{\cdot}{\n})$. This appears in the sum, that is, the character is odd, if and only if $\n\equiv 3 \bmod 4$ and in this case, we have 
\begin{align}\label{Eq: Quadratic Reciprocity}
   \psi(D)=\Big(\frac{D}{\n}\Big)=(-1)^{\frac{(D-1)(\n-1)}{4}}\Big(\frac{\n}{D}\Big)
=
-1, 
\end{align}
by quadratic reciprocity along with the facts that $D\equiv 1\bmod 4$ and $(\frac{\n}{D})=-1$. Hence, we obtain 
\begin{align}\label{Eq: Gamma Evaluation III}
\gamma_D'(u,\mf{n}_D)& =
-\frac{\pi i}{\phi(\mf{n}_D)}\sum_{\substack{\psi \bmod \mf{n}_D \\ \psi \neq \psi_0, \,  \psi(-1)=-1 \\ \psi \textrm{ complex}}} \psi(u) \Big[
L(0,\overline{\psi})L(0,\chi_D\psi)-\psi(D) L(0,\psi)L(0,\chi_D\overline{\psi})\Big] \notag \\
&\quad -\mathbbm{1}_{\n \equiv 3\bmod 4} \cdot \frac{2\pi i}{\phi(\mf{n}_D)}\chi_{\n}(u) L(0,\chi_{\n})L(0,\chi_D\chi_{\n}),
\end{align}
where $\chi_{\n}$ denotes the real non-principal character modulo $\n$. 

To finish our proof, we note the following. Since the remaining odd characters occur in complex conjugate pairs, we may pair these terms up to obtain
\[
\gamma_D'(u,\mf{n}_D) = i \beta_D(u,\mf{n}_D)
\]
with $\beta_D(u,\mf{n}_D)$ as defined in \eqref{Beta def}. Therefore, we have 
\[
\gamma_D(u,\mf{n}_D)
=
\sqrt{D}
L(1,\chi_D)
-i \beta_D(u,\mf{n}_D).
\]
Since $\psi$ and $\chi_{\n}$ are both odd, $\psi(\n-u)=-\psi(u)$ and $\chi_{\n}(\n-u) = -\chi_{\n}(u)$. Hence we get $
\beta_D(\n-u,\mf{n}_D)=-\beta_D(u,\mf{n}_D).$ It follows that $e(uN/\n)e^{-i \beta_D(u,\mf{n}_D)}$ and $e((\n-u)N/\n)e^{-i\beta_D(\n-u,\mf{n}_D)}=e(-uN/\n)e^{i\beta_D(u,\mf{n}_D)}$ are conjugates. Pairing up these terms in the sum for $\mf{S}^{0}(N)$ in \eqref{Eq: Sigma0} gives
\begin{align*}
\mf{S}^{0}(N)&=  e^{\sqrt{D}L(1,\chi_D)} \sum_{\substack{u \bmod \mf{n}_D \\ (u,\mf{n}_D)=1}}{e^{-i\beta_{D}(u,\mf{n}_D)}e(uN/\mf{n}_D)} \\
& =e^{\sqrt{D}L(1,\chi_D)} \sum_{1\leq u\leq \tfrac{\n-1}{2}} \bigg( {e^{-i\beta_{D}(u,\mf{n}_D)}e(uN/\mf{n}_D)}+{e^{i\beta_{D}(u,\mf{n}_D)}e(-uN/\mf{n}_D)}\bigg) \\
& = 2e^{\sqrt{D}L(1,\chi_D)}\sum_{1\leq u\leq \tfrac{\n-1}{2}}\cos\Big(\frac{2\pi uN}{\n}-\beta_{D}(u,\mf{n}_D)\Big),
\end{align*}
for $\n\geq 3$. If $\n=2$, we can only have the term $u=1$ in \eqref{Eq: Sigma0} and $\beta_D(1,2)=0$. So we obtain 
\[
\mf{S}^{0}(N)=e^{\sqrt{D}L(1,\chi_D)}e(N/2)=(-1)^N e^{\sqrt{D}L(1,\chi_D)}.
\]
This completes the proof.
\end{proof}
\subsection{Proof of Theorem \ref{gen thm}} We first show that $a_{D}(N) \in \mathbb{Q}(\sqrt{D})$. Recall that $q = e^{\frac{2 \pi i z}{\sqrt{D}}}$ and $\zeta_{D} = e^{\frac{2\pi i}{D}}$. By interchanging the products in \eqref{Defn: etaD}, we have the relation
\begin{align}\label{Eq: Key Relation}
\eta_{D}(z) & = q^{-\frac{L(-1,{\chi_{D}})}{2}} \cdot \prod_{\substack{n=1 \\ \chi_D(n) = -1 }}^\infty (1-q^n)^{-2} \cdot \prod_{\substack{n=1}}^\infty (1-q^n) \cdot \prod_{\substack{n=1 \\ (n,D)>1 }}^\infty (1-q^n)^{-1} \notag \\
&\quad \times \prod_{a=1}^{D-1} \prod_{n=1}^{\infty}(1-\zeta_{D}^a \cdot  q^n)^{{\chi_{D}}(a)}.
\end{align}
Therefore, it suffices to study only the coefficients arising from the products over $a$ in \eqref{Eq: Key Relation}. We treat the case where the product runs over the quadratic residues modulo $D$; the argument for the quadratic non-residues is concomitant.
\par
Since $D \equiv 1 \bmod{4}$, we have $\mathbb{Q}(\sqrt{D}) \subseteq \mathbb{Q}(\zeta_{D})$. Upon expanding the product
\[
\prod_{\substack{a=1 \\ \chi_D(a)=1}}^{D}(1 - \zeta_{D}^a q^n),
\]
the resulting coefficients are elementary symmetric polynomials in $\zeta_{D}^a$. We consider the Galois group $\mathrm{Gal}(\mathbb{Q}(\zeta_{D})/\mathbb{Q})$, which has order $\phi(D)$. Let $H$ denote the subgroup defined by
\[
H = \left\{ \sigma \in \mathrm{Gal}(\mathbb{Q}(\zeta_{D})/\mathbb{Q}) : \sigma(\zeta_{D}) = \zeta_{D}^k \text{ for some } k \in (\mathbb{Z}/D\mathbb{Z})^* \text{ with } \left( \frac{k}{D} \right) = 1 \right\}.
\]
This subgroup $H$ has index $2$ and its fixed field is quadratic. Moreover, the Gauss sum $\tau(\chi_D) = \sum_{a=1}^{D} \left( \frac{a}{D} \right) \zeta_{D}^a$
is equal to $\sqrt{D}$ and is fixed by all elements of $H$. Therefore the fixed field of $H$ is $\mathbb{Q}(\sqrt{D})$. Since symmetric polynomials in $\zeta_{D}^a$ (with $a$ ranging over quadratic residues modulo $D$) are fixed under $H$, the coefficients lie in $\mathbb{Q}(\sqrt{D})$. Note that the expressions involve only roots of unity, so they are algebraic integers. Hence the coefficients lie in the ring of integers $\mathcal{O}_K$ of \( K=\mathbb{Q}(\sqrt{D}) \), which is \( \mathbb{Z}\left[\frac{1 + \sqrt{D}}{2}\right] \).
\par
Finally, the asymptotic relation \eqref{Eq: Key Asymptotic} follows immediately from \eqref{a} and Lemma \ref{Lem: Singular Series}. \qed 

\subsection{Proof of Proposition \ref{n3 prop}} Recall the definition of $\beta_D(u,\mf{n}_D)$ from \eqref{Beta def}. We need the following lemma, which allows us to write $\beta_D(u,\mf{n}_D)$ in terms of values of $L$-functions at $s=1$.
\begin{lem}\label{L1 lem}
Let $\beta_D(u,\mf{n}_D)$ as defined in \eqref{Beta def}. We have 
\begin{align*}
\beta_D(u,\mf{n}_D)& =
\frac{2 \mf{n}_D \sqrt{D}}{\pi \phi(\mf{n}_D)} \operatorname{Re}\sideset{}{'}\sum_{\substack{\psi \bmod \mf{n}_D \\ \psi \textnormal{ complex, odd}}} \psi(u) \Big[
 L(1,\overline{\psi})L(1,\chi_D\psi)-\psi(D)L(1,\psi)L(1,\chi_D\overline{\psi})\Big] \notag \\
&\quad + \mathbbm{1}_{\n \equiv 3\bmod 4} \cdot \frac{2\mf{n}_D \sqrt{D}}{ \pi \phi(\mf{n}_D)}\chi_{\n}(u) L(1,\chi_{\n})L(1,\chi_D\chi_{\n}).
\end{align*}
with the same convention for the primed sum as defined in \eqref{Beta def}.
\end{lem}
\begin{proof}
For an odd character $\chi \bmod q$, by \eqref{General FE} we have 
\[
L(0,\chi) = \frac{\tau(\chi)}{i\pi} L(1,\overline{\chi}).
\]
By \eqref{Gauss Sum Factoring}, we also have
\[
\tau(\ol{\psi})\tau(\chi_D\psi)
=
\tau(\ol{\psi})\chi_D(\n)\psi(D)\tau(\chi_D)\tau(\psi)
=
\n\sqrt{D}\psi(D).
\]
Therefore, we obtain
\begin{align*}
    L(0,\overline{\psi})L(0,\chi_D\psi) = -\frac{1}{\pi^2}L(1,\psi)L(1,\chi_D\overline{\psi}) \tau(\ol{\psi})\tau(\chi_D\psi) = -\frac{\mf{n}_D \sqrt{D}}{\pi^2} \psi(D)L(1,\psi)L(1,\chi_D\overline{\psi}).
\end{align*}
A similar treatment for the products $L(0,\psi)L(0,\chi_D\overline{\psi})$ \and $L(0,\chi_{\n})L(0,\chi_D\chi_{\n})$ gives us the desired result. For clarity, we mention that we have used \eqref{Eq: Quadratic Reciprocity} while evaluating $\chi_{\n}(D)$.
\end{proof}


\par
We are now ready to prove Proposition \ref{n3 prop}.
\begin{proof} In the case $\n=3$, it is easier to use the formula for $\beta_D(u,\mf{n}_D)$ in terms of $L$-values at $s=1$ as in Lemma \ref{L1 lem}. The characters $\chi_3$ and $\chi_D \chi_3$ correspond to the imaginary quadratic fields $\mathbb{Q}(\sqrt{-3})$ and $\mathbb{Q}(\sqrt{-3D})$ respectively. We shall use the class number formula for imaginary quadratic fields which states
\begin{equation*}
L(1,\chi)=\frac{2\pi h(d)}{w \sqrt{\lvert d \rvert}}, \quad \chi = \left( \frac{d}{\cdot}\right), \quad d<0
\end{equation*} 
where $h(d)$ is the class number of $\mb{Q}(\sqrt{d})$ and $w$ is the number of roots of unity in this field. We have
\[
\beta_D(1,3)
= \beta =
\frac{1}{\pi}\cdot\sqrt{3}L(1,\chi_3)\cdot\sqrt{3D}L(1,\chi_D\chi_3)
=
\frac{1}{\pi}\cdot \frac{2\pi h(-3)}{6}\cdot \frac{2\pi h(-3D)}{2}
=
\frac{\pi h(-3D)}{3}.
\]
Since $D \equiv 1 \bmod 4$ is squarefree and $(3,D)=1$ since $\n=3$, $h(-3 D)$ is even. 
In particular, $h(-3D)\equiv 0,2,4 \bmod 6$ which shows that the pattern of the cosine factor takes the form 
\[
 \cos\bigg(\frac{2\pi}{3}N-\beta\bigg)=1,-\tfrac12,-\tfrac12.
\]
When $D=17$ or $41$, we have $h(-3D)=2$ and this sequence occurs exactly when $N \equiv 1,2,3\bmod 3$. 

The only other possible sign pattern is $-1,\frac12,\frac12$ which can only occur when $h(-3D)\equiv 1,3,5 \bmod 6$. But since $h(-3D)$ is even, this never happens.

When $\n=5$, the real character in $\beta_D(u,\mf{n}_D)$ has no contribution. We need to consider a pair of complex conjugate characters. Let $\psi$ be the character with the sequence of values $\psi(n)=1,i,-i,-1$ for $n$ increasing. Then we have
\begin{align}\label{Eq: Sigma n_D=5}
\mf{S}_D(N)
=
2\cos\bigg(\frac{2\pi }{5}N-\beta_D(1,5)\bigg)+2\cos\bigg(\frac{4\pi }{5}N-\beta_D(2,5)\bigg)
\end{align}
where 
\[
\beta_D(u,5)=\frac{\pi}{2}\operatorname{Re}\bigg(\psi(u) \Big[
L(0,\overline{\psi})L(0,\chi_D\psi)-\psi(D) L(0,\psi)L(0,\chi_D\overline{\psi})\Big]\bigg).
\]
By the formula 
\begin{align}\label{Eq: s=0}
L(0,\chi)=-\frac{1}{q}\sum_{a=1}^q \chi(a)a,
\end{align}
valid for $\chi \bmod q$, we find that 
\[
L(0,{\psi})=
\frac{3+i}{5}, \quad \textrm{and} \quad L(0,\overline{\psi})=
\frac{3-i}{5}.
\]
\par
Now, the first moduli $D\equiv 1 \bmod 4$ which has $\n=5$ is $D=73$. In this case, $\psi(D)=-i$ and a short computation using \eqref{Eq: s=0} gives 
\[
L(0,\chi_D\psi)
=
-2+4i, \quad \textrm{and} \quad L(0,\chi_D\overline{\psi})
=
-2-4i.
\]
Inputting these values, we find 
\[
\beta_{73}(1,5)=\frac{6\pi}{5},
\quad \textrm{and} \quad
\beta_{73}(2,5)=-\frac{6\pi}{5}.
\]
Putting this in \eqref{Eq: Sigma n_D=5} followed by a numerical check yields the desired conclusion.
\end{proof}

\section{Proof of Theorem \ref{mod 5 thm}}\label{Sec: Sketch Proof}
Here we give a brief sketch of how one can refine the methods involved in the proof of Theorem \ref{gen thm} to obtain Theorem \ref{mod 5 thm}.
\par
We first note that with a little extra effort, the asymptotic given in Corollary \ref{main thm} can be improved to the series expansion
\[
a_5(N)
= \frac{(-1)^N}{N^{3/4}}
\frac{\phi^2}{2\sqrt{5}}\exp\Big(\frac{2\pi\sqrt{N}}{5}\Big)
\bigg(1+\sum_{j=1}^Jb_jN^{-j/2}+\mathcal{O}(N^{-(J+1)/2})\bigg)
\]
for some computable constants $b_j$ and $\phi = (1+\sqrt{5})/2$. Indeed, by shifting contours arbitrarily far to the left in the proof of Proposition \ref{precise lem}, we acquire an asymptotic expansion of $\log G(q)$ as $q\to -1$ in terms of powers of $\log(-1/q)$ to any required degree of accuracy. We combine this with an arbitrarily high precision Taylor expansion of $f(\theta)$ in the stationary phase argument. Expanding the resulting sub-constant terms in the exponential into a series and integrating them against the Gaussian then yields the desired result. In particular, we note that after Taylor expanding $f(\theta)$, the terms 
\[
\tfrac{1}{m!} f^{(m)}(0)\theta^m, \quad m \geq 3
\]
are $o(1)$. Therefore using the relation $e^x = 1+x+\cdots$, the contribution from $m$ odd shall vanish due to the symmetry of the integral.

For the secondary terms, we note that from Proposition \ref{precise lem}, the next largest constant in the leading coefficient of the expansion of $\log G(q)$ occurs around the points $q\to \zeta_5^2,\zeta_5^3$. This is the regime $D \mid v$. Therefore using \eqref{log F 3}, we have 
\[
\log G(re^{i\theta+2\pi i u/5})
=
\frac{4\pi^2}{125(\log(1/r)-i\theta)}
+a_5(u,5)\log(\log(1/r)-i\theta)+b_5(u,5)+\mathcal{O}(1-r),
\]
for $u=2,3$ and computable constants $a_5(u,5)$ and $b_5(u,5)$. Again, the lower order terms here are computable to any required degree of accuracy by shifting contours further to the left in Proposition \ref{precise lem}. 

From a more detailed analysis of the arguments of Proposition \ref{precise lem}, we find that 
\[
a_5(u,5)=0, \quad u=2,3.
\]
To see this, first note that we are in the case $D \mid v$; in fact, we have $v=D=5$. There is no contribution from $S_{21}$, and we only need to evaluate the contribution from $S_{22}$. The coefficient of $\log(\log(1/q))$ arises from the contribution of the double pole at $s=0$ in \eqref{Eq: Double Pole}. This only happens when $\psi = \chi_5$ in which case, $L(1+s,\chi_5\psi) = L(1+s,\psi_0)$ where $\psi_0$ is the principal character modulo $5$. A quick computation shows that the residue from the double pole of the integrand in \eqref{Eq: Double Pole} when $\psi = \chi_5$ is
\begin{align}\label{Eq: Residue at Double Pole}
\prod_{p \mid 5}& \bigg(1-\frac{1}{p} \bigg)  \cdot \bigg( P'(0,au/5)+P(0,au/5) \bigg( \sum_{p \mid 5} \frac{\log p}{p-1} -\log\log(1/q) \bigg) \bigg) \notag \\
&= \frac{4}{5} \bigg( P'(0,au/5)+P(0,au/5) \bigg( \frac{\log 5}{4} -\log\log(1/q) \bigg) \bigg).
\end{align}
Summing over $a$ in \eqref{Eq: Double Pole}, it suffices to check that
\begin{align}\label{Eq: Polylogarithm}
\sum_{a=1}^{4} \chi_D(a)\, P(0,au/5) = 0, \qquad u=2,3.
\end{align}
A revisit to \eqref{Eq: Connecting P with Gauss Sums} and the fact $L(0,\chi_5)=0$ since $\chi_5$ is even gives the desired claim.
\par
Then, in-keeping with the saddle point method, as before we choose the contour around the points $\zeta_5^2,\zeta_5^3$ to have radius such that the derivative of the function 
\[
f(\theta)=\frac{4\pi^2}{125 i(\log(1/r)-i\theta)}-N\theta
\] 
vanishes at $\theta=0$, namely $\log(1/r)=\sqrt{4\pi^2N/125}=2\pi\sqrt{N}/(5\sqrt{5})$. Then combining the contributions from both points $\zeta_5^2,\zeta_5^3$, the methods of stationary phase give a leading order contribution of 
\begin{align*}
&\bigg( \sum_{u=2,3} e^{b_5(u,5)-2\pi iuN/5} \bigg)\cdot \frac{1}{2\pi r^N}e^{if(0)}\int_\mb{R}e^{\tfrac{i}{2}f''(0)\theta^2} \, d\theta \\
&=
\frac{(4\pi^2/125)^{1/4}}{2\sqrt{\pi}N^{3/4}}\bigg(\sum_{u=2,3}e^{b_5(u,5)-2\pi i uN/5}\bigg)\exp\bigg(\frac{4\pi\sqrt{N}}{5\sqrt{5}}\bigg),
\end{align*}
since 
\[
if(0)=\frac{2\pi}{5\sqrt{5}}\sqrt{N},\quad \textrm{and} \quad f''(0)=2i\frac{N^{3/2}}{(4\pi^2/125)^{1/2}}.
\]
Therefore, all it remains is to show that
\begin{align}\label{Eq: Aim}
\sum_{u=2,3}e^{b_5(u,5)-2\pi i uN/5}=2\phi^2\cos\bigg(\frac{4\pi N}{5}+\frac{8\pi}{25}\bigg),
\end{align}
where $\phi = (1+\sqrt{5})/2.$ To prove \eqref{Eq: Aim}, we need to evaluate $b_5(u,5)$ for which we need a more detailed analysis of Proposition \ref{precise lem}.
\par
Firstly, let's treat the contribution from $S_1$ in Proposition \ref{precise lem} to $b_5(u,5)$. Our case is $D \mid v$, in fact, $v=D=5$. Therefore $v'=1$ and only $S_{11}'$ contributes. The contribution from $S_{11}'$ is evaluated as follows:
\begin{align*}
\sum_{a=1}^{4} \chi_5(a)P(1,au/5)\zeta(0,a/5) = \sum_{a=1,2} \chi_5(a) \zeta(0,a/5) (P(1,au/5)-P(1,-au/5))
\end{align*}
because $\chi_5(-a)=\chi_5(a)$ and $\zeta(0,(5-a)/5)=-\zeta(0,a/5)$ by using the relation $\zeta(0,\alpha) = \tfrac{1}{2}-\alpha$, $0<\alpha<1$. Now, we use the relation $P(1,\alpha) =\textrm{Li}_1(e^{2 \pi i \alpha}) = -\log(1-e^{2\pi i \alpha})$ where $\textrm{Li}_s(z)$ is the polylogarithm function. This yields
\begin{align*}
P(1,\alpha) - P(1,-\alpha) &= -\log(1-e^{2\pi i \alpha})+\log(1-e^{-2\pi i \alpha}) \\
&= -\log(1-e^{2\pi i \alpha})+\log(1-e^{2\pi i \alpha})+\log(-1)-\log(e^{2\pi i \alpha})\\
&=2 \pi i \bigg(\frac{1}{2}-\alpha\bigg), \quad 0<\alpha<1.
\end{align*} 
Hence we obtain
\begin{align*}
2 \pi i \sum_{a=1,2} \chi_5(a) \zeta(0,a/5) (P(1,au/5)-P(1,-au/5)) = 2 \pi i\sum_{a=1,2} \chi_5(a) \bigg(\frac{1}{2}-\frac{a}{5} \bigg)\bigg(\frac{1}{2} -\frac{au}{5} \bigg).
\end{align*}
Therefore, we record the following contributions from $S_1$:
\begin{align}\label{Contribution 1}
\frac{3\pi i}{25}, \quad u=2, \quad \textrm{and} \quad -\frac{3\pi i}{25}, \quad u =3.
\end{align}
\par
Next, we look at the contribution from $S_2$. This is given by the contributions to the integral \eqref{Eq: Double Pole}. For the double pole which happens when $\psi = \chi_5$, following \eqref{Eq: Residue at Double Pole} and \eqref{Eq: Polylogarithm},
the contribution is
\begin{align*}
    \frac{4}{5} \cdot \frac{1}{\phi(5)} \sum_{a=1}^{4} \chi_5(a)\, P'(0,au/5),
\end{align*}
because the $P(0,au/5)$ term vanishes after summing over $a$. We can again use the relation \eqref{Eq: Connecting P with Gauss Sums}. By analytic continuation of both sides (note that $au/5 \notin \mathbb{Z}$ and $\psi$ is non-principal), we can take derivatives with respect to $s$. Applying this here, the contribution becomes
\[
\frac{1}{5} \chi_5(u)\tau(\chi_5)L'(0,\chi_5).
\]
Since $\chi_5$ is even, an application of the functional equation shows that $L'(0,\chi_5) =\tfrac{\sqrt{5}}{2}L(1,\chi_5)$. Recall that $L(1,\chi_5) = \tfrac{2}{\sqrt{5}}\log \phi$. Hence we record the contribution from the double pole case:
\begin{align}\label{Contribution 3}
-\frac{1}{\sqrt{5}}\log \phi, \quad u=2,3.
\end{align}
\par
Finally, we again look at the contribution from $S_2$ but this time, the contributions are from the simple pole in the integral \eqref{Eq: Double Pole}. These occur when $\psi \neq \chi_5$. We separate into $\psi = \psi_0$ principal, and $\psi \neq \psi_0$.\\
When $\psi = \psi_0$, we obtain 
\begin{align*}
    \frac{1}{\phi(5)} L(1,\chi_5 \psi_0) \sum_{a=1}^{4} \psi_0(a)\, P(0,au/5)= \frac{L(1,\chi_5)}{\phi(5)} \bigg(-L(0,\psi_0) + \phi(5) \zeta(0)\bigg) =- \frac{1}{2} L(1,\chi_5),
\end{align*}
similar to how we evaluated in \eqref{Eq: Principal Character Relation}. Therefore, the contribution from the principal character $\psi_0$ is
\begin{align}\label{Eq: Contribution 4}
   - \frac{1}{2} L(1,\chi_5) = -\frac{1}{\sqrt{5}}\log \phi, \quad u=2,3. 
\end{align}
Our last contribution is from the complex characters. Let $\psi$ be the character with the sequence of values $\psi(n)=1,i,-i,-1$ for $n$ increasing. Using \eqref{Eq: Connecting P with Gauss Sums} and \eqref{General FE}, the contribution here is
\begin{align*}
\frac{1}{\phi(5)} &\bigg( L(1,\chi_5 \psi)\sum_{a=1}^{4} \overline{\psi}(a) \, P(0,au/5) + L(1,\chi_5 \overline{\psi})\sum_{a=1}^{4} \psi(a) \, P(0,au/5) \bigg) \\
&= \frac{1}{4} \bigg( L(1,\chi_5 \psi)\psi(u)\tau(\ol{\psi})L(0,\psi) + L(1,\chi_5 \overline{\psi})\ol{\psi}(u)\tau({\psi})L(0,\ol{\psi}) \bigg) \\
&= \frac{\pi}{20i} \bigg( \psi(u) \tau(\chi_5 \psi)\tau(\ol{\psi})L(0,\psi) L(0,\chi_5 \ol{\psi}) + \ol{\psi}(u)\tau(\chi_5\ol{\psi})\tau({\psi})L(0,\ol{\psi}) L(0,\chi_5 \psi) \bigg).
\end{align*}
Note that $\chi_5 \psi = \ol{\psi}$. Thus the above simplifies to
\begin{align*}
\frac{\pi}{20i} &\bigg( \psi(u)\tau(\ol{\psi})^2L(0,\psi)^2 + \ol{\psi}(u)\tau({\psi})^2L(0,\ol{\psi})^2\bigg)=\frac{\pi}{10i} \operatorname{Re}\bigg( \psi(u)\tau(\ol{\psi})^2L(0,\psi)^2  \bigg).
\end{align*}
Therefore it suffices to evaluate $\tau(\ol{\psi})^2$ and $L(0,\psi)^2$. We already calculated before in the proof of Proposition \ref{n3 prop} that $L(0,{\psi})=
\tfrac{3+i}{5}$. A quick evaluation of the Gauss sum gives $\tau(\ol{\psi})^2 = -\sqrt{5}(1-2i)$. So we obtain the following contributions:
\begin{align}\label{Eq: Contribution 5}
    \frac{\pi}{10i} \operatorname{Re}\bigg( -\frac{i}{5\sqrt{5}}\cdot (1-2i) \cdot (3+i)^2  \bigg) = \frac{\pi i}{5\sqrt{5}}, \quad u=2, \quad \textrm{and} \quad -\frac{\pi i}{5\sqrt{5}}, \quad u=3.
\end{align}
Combining all the contributions from \eqref{Contribution 1}, \eqref{Contribution 3}, \eqref{Eq: Contribution 4}, \eqref{Eq: Contribution 5}, we arrive at
\begin{align*}
    b_5(2,5) = 2\log \phi-\frac{8 \pi i }{25}, \quad \textrm{and} \quad
    b_5(3,5) = 2 \log \phi+\frac{8 \pi i}{25}.
\end{align*}
It follows that
\begin{align*}
\sum_{u=2,3}e^{b_5(u,5)-2\pi i uN/5} & = e^{2 \log \phi} \bigg( e^{-\frac{8 \pi i}{25}-\frac{4\pi i N}{5} } + e^{\frac{8 \pi i}{25}-\frac{6\pi i N}{5} }\bigg) \\
& = 2\phi^2 \operatorname{Re} (e^{-\frac{8 \pi i}{25}-\frac{4\pi i N}{5} })= 2\phi^2\cos\bigg(\frac{4\pi N}{5}+\frac{8\pi}{25}\bigg)
\end{align*}
which completes the proof of \eqref{Eq: Aim}. In conclusion, we obtain the desired secondary term
\[
\frac{\sqrt{2}\phi^2}{5^{3/4}N^{3/4}}\cos\bigg(\frac{4\pi N}{5}+\frac{8\pi}{25}\bigg)\exp\bigg(\frac{4\pi\sqrt{N}}{5\sqrt{5}}\bigg),
\]
again, with lower order terms calculable if desired. 
\par
To complete the proof, we note from Proposition \ref{precise lem} that the next largest exponential term arises when $v=3$, the second least quadratic non-residue modulo $5$. A quick calculation shows that this contribution, as well as all other lower order terms is
\[
\ll N^{-\frac34} \exp \bigg(2 \sqrt{\frac{\sqrt{5}L(2,\chi_5) N}{3^2}} \bigg) \ll N^{-\frac34} \exp \bigg(\frac{4 \pi}{15} \sqrt{N}\bigg),
\]
which completes the proof. \qed
\begin{rem}\label{Comparion Remark}
We provide a brief comparison between the cases $D=5$ and $D=13$. A short computation using Proposition \ref{precise lem} and the fact that $L(2,\chi_{13}) = 4\pi^2/(13\sqrt{13})$ shows that the first two leading exponential terms in the asymptotic expansion of $a_{13}(N)$ arises from the cases $v=\mf{n}_{13}=2$ and $v=13$. The precise exponential terms are
\[
\exp\bigg( \frac{2 \pi \sqrt{N}}{\sqrt{13}} \bigg) \quad \textrm{and} \quad  \exp\bigg( \frac{4 \pi \sqrt{N}}{13} \bigg).
\]
Comparing it with the asymptotic \eqref{eq:two-cusp-theorem}, we see that the secondary exponential term is much closer to the leading exponential term when $D=5$ compared to $D=13$.
\end{rem}
\section{Exact Identities between Fourier Coefficients and Partitions}\label{sec: Fourier Coefficients} In this section, we explore several interesting connections between the Fourier coefficients $a_D(N)$ of $\eta_{D}(z)$ and the theory of partitions. Our starting point is the relation \eqref{Eq: Key Relation}. Let $\theta \in \mathbb{C}$ with $\lvert \theta \rvert \leq 1$. We define the following three products:
\begin{align*}
F_{\textrm{ord}}(q,\theta)= \prod_{\substack{n=1}}^{\infty} \left (1-\theta \cdot q^n \right )^{-1}, \, \, 
F_{e}(q,\theta) = \prod_{\substack{n=1}}^{\infty} \left (1-\theta \cdot q^n \right ), \,\, \textrm{and} \, \,
F_{\textrm{NR}}(q,D) = \prod_{\substack{n=1 \\ \chi_D(n)=-1}}^{\infty} \left (1-q^n \right )^{-1}.
\end{align*}
In the special case when $D$ is prime, \eqref{Eq: Key Relation} gives
\begin{align}\label{Rearranging our F II}
\eta_{D}(z) &=  q^{-\frac{L(-1,{\chi_{D}})}{2}} \cdot  F_{\textrm{NR}}(q,D)^2 \cdot  F_{e}(q,1) \cdot F_{\textrm{ord}}(q^D,1) \prod_{\substack{a=1 \\ \chi_D(a)=1}}^{D}  F_{e}(q,\zeta_{D}^{a}) \prod_{\substack{a=1 \\ \chi_D(a)=-1}}^{D} F_{\textrm{ord}}(q, \zeta_{D}^{a}).
\end{align}
By Euler's Pentagonal Number Theorem, we have for $\lvert q\rvert <1$,
\begin{align}\label{Pentagonal Number Theorem}
F_e(q,\theta) &= 1+\sum_{k=1}^{\infty}(-1)^k\left(\theta^{3 k-1} q^{k(3 k-1) / 2}+\theta^{3 k} q^{k(3k+1) / 2}\right) =\sum_{k \in \mathbb{Z}} p_e(k,\theta) \, q^{g(k)},
\end{align}
where
\[
g(k) = \frac{k(3k-1)}{2}, \quad k \in \mathbb{Z}, \quad \textrm{and} \quad 
p_e(k,\theta) =\begin{cases}
  (-1)^k \cdot \theta^{3k-1},  & \textrm{if } k >0 \\
  (-1)^{k} \cdot \theta^{-3k} , \quad & \textrm{if } k \leq 0.
\end{cases}
\]
Note that the sequence $\{g(k) \}$ forms the generalized pentagonal numbers. Next, we write $\lambda \vdash k$ to denote that $\lambda$ is a partition of $k$. Then we have
\begin{align}\label{Ordinary Partition}
F_{\textrm{ord}}(q,\theta) = \sum_{k=0}^{\infty} p_{\textrm{ord}}(k,\theta) \, q^{k},
\end{align}
where 
\[
p_{\textrm{ord}}(k,\theta) = \sum_{\lambda \vdash k}  \theta^{\ell(\lambda)},
\]
and $\ell(\lambda)$ denotes the number of parts of the partition $\lambda$. In particular, when $\theta = 1$, we obtain $p_{\textrm{ord}}(k,1) =p(k)$, the ordinary partition function. Finally, we also have
\begin{align}\label{QNR Partitions}
F_{\textrm{NR}}(q,D) = \sum_{k=0}^{\infty}p_{\textrm{NR}} (k,D) \, q^k, \quad \lvert q \rvert < 1,
\end{align}
where $p_{\textrm{NR}}(k,D)$ denotes the number of partitions of $k$ into parts which are all quadratic non-residues modulo $D$. Putting together \eqref{Rearranging our F II}--\eqref{QNR Partitions}, we can write 
\begin{align}\label{Exact Formula}
    a_{D}(N) = \sum_{\vec{k}} \sum_{\vec{\ell}} \sum_{\vec{m}} p_{\textrm{NR}} (k_1,D)  p_{\textrm{NR}} (k_2,D) \prod_{i=0}^{\phi(D)/2} p_e(\ell_i, \zeta_{D}^{a_i}) \prod_{j=0}^{\phi(D)/2} p_{\textrm{ord}}(m_j, \zeta_{D}^{b_j}).
\end{align}
Here we set $a_0=b_0=1$ and \( \{a_i\} \) and \( \{b_j\} \) denote the \( i \)-th quadratic residue and the \( j \)-th quadratic non-residue modulo \( D \), respectively, in the interval \( [1, D] \), where $1 \leq i,j \leq \phi(D)/2$. The sum in \eqref{Exact Formula} is taken subject to the following conditions:

\begin{align*}
\vec{k} &= (k_1, k_2), \,  0 \leq k_1,k_2 \leq N, \\
\vec{\ell} &= (\ell_0, \ell_1, \cdots, \ell_{\frac{\phi(D)}{2}}), \, \ell_i \in \mathbb{Z}, \, 0 \leq g(\ell_i) \leq N, \\
\vec{m} &= (m_0, m_1, \cdots, m_{\frac{\phi(D)}{2}}), \, 0 \leq Dm_0, m_j \leq N, \textrm{ for all $j\geq 1$ }, \\
N &= k_1+k_2+\sum_{i=0}^{\phi(D)/2}g(\ell_i) \, +Dm_0 + \sum_{j=1}^{\phi(D)/2}m_j.
\end{align*}
In particular, the relation \eqref{Exact Formula} provides an exact expression for the Fourier coefficients $a_D(N)$ in terms of certain classical functions related to partitions. By Rademacher's work \cite{Rademacher1, Rademacher2}, we have
\begin{align}\label{Partition Exact Formula I}
p_{\textrm{ord}}(k,1) = p(k)=\frac{2 \pi}{(24 k-1)^{\frac{3}{4}}} \sum_{m=1}^{\infty} \frac{A_m(k)}{m} \cdot I_{\frac{3}{2}}\left(\frac{\pi}{6 m} \sqrt{24 k-1}\right),
\end{align}
where $A_m(k)$ is a certain Kloosterman sum and $I_{\nu}$ is the modified Bessel function of first kind. There are many results in the literature regarding exact formulas for $p_{\textrm{NR}} (k,D)$. When $D=5$, Lehner \cite{Lehner} proved that
\begin{align}\label{Partition Exact Formula II}
p_{\textrm{NR}} (k,5) &= \frac{2 \pi}{(60 k+11)^{\frac{1}{2}}} \sum_{\substack{m>0 \\
5 \nmid m}} \frac{\widetilde{A}_m(k)}{m} I_1\bigg(\frac{\pi(60 k+11)^{\frac{1}{2}}}{15 m}\bigg) \notag \\
& \quad+\frac{\pi}{(60 k+11)^{\frac{1}{2}}} \sum_{\substack{m>0 \\
5 \nmid m}}\left|\csc \frac{\pi a m}{5}\right| \frac{\widetilde{B}_m(k)}{m} I_1\bigg(\frac{\pi(60 k+11)^{\frac{1}{2}}}{15 m}\bigg)
\end{align}
where $\widetilde{A}_m(k)$ and $\widetilde{B}_m(k)$ are certain Dedekind sums and $I_1$ is the modified Bessel function of the first kind. It would be interesting to obtain similar exact formulas for $a_D(N)$ in terms of Kloosterman sums and Bessel functions. 
\section{Table of Fourier Coefficients}\label{sec: Graphical Data}

The following table contains the precise values of $a_{D}(N)$ for $D = 5,13$ and $17$.

\begin{table}[ht]
\centering
\renewcommand{\arraystretch}{1.2} 
\begin{tabular}{|c|c|c|c|}
\hline
\textbf{\( N \)} 
& \textbf{$a_{5}(N)$} 
& \textbf{$a_{13}(N)$} 
& \textbf{$a_{17}(N)$} \\
\hline
1  & \( -1 - \sqrt{5} \) & \( -1 - \sqrt{13} \) & \( -1 - \sqrt{17} \) \\
\hline
2  & \( \frac{1}{2}(7 + \sqrt{5}) \) & \( \frac{1}{2}(15 + \sqrt{13}) \) & \( \frac{1}{2}(15 - \sqrt{17}) \) \\
\hline
3  & \( -2\sqrt{5} \) & \( -2 - 4\sqrt{13} \) & \( 19 - \sqrt{17} \) \\
\hline
4  & \( 4 - \sqrt{5} \) & \( 27 + \sqrt{13} \) & \( \frac{1}{2}(13 - 23\sqrt{17}) \) \\
\hline
5  & \( 6 - 2\sqrt{5} \) & \( -12\sqrt{13} \) & \( 69 - 11\sqrt{17} \) \\
\hline
6  & \( \frac{1}{2}(13 - 7\sqrt{5}) \) & \( 66 - 3\sqrt{13} \) & \( 139 - 23\sqrt{17} \) \\
\hline
7  & \( 11 - 5\sqrt{5} \) & \( 27 - 29\sqrt{13} \) & \( 166 - 70\sqrt{17} \) \\
\hline
8  & \( \frac{1}{2}(35 - 13\sqrt{5}) \) & \( 155 - 18\sqrt{13} \) & \( 492 - 89\sqrt{17} \) \\
\hline
9  & \( 19 - 11\sqrt{5} \) & \( 128 - 64\sqrt{13} \) & \( 818 - 182\sqrt{17} \) \\
\hline
10 & \( \frac{1}{2}(69 - 25\sqrt{5}) \) & \( \frac{1}{2}(715 - 119\sqrt{13}) \) & \( 1242 - 378\sqrt{17} \) \\
\hline
11 & \( 37 - 21\sqrt{5} \) & \( 364 - 148\sqrt{13} \) & \( 2567 - 561\sqrt{17} \) \\
\hline
12 & \( \frac{1}{2}(129 - 45\sqrt{5}) \) & \( 795 - 164\sqrt{13} \) & \( 4235 - 998\sqrt{17} \) \\
\hline
13 & \( 70 - 38\sqrt{5} \) & \( 912 - 332\sqrt{13} \) & \( 6780 - 1756\sqrt{17} \) \\
\hline
14 & \( 108 - 43\sqrt{5} \) & \( 1752 - 393\sqrt{13} \) & \( \frac{1}{2}(23637 - 5515\sqrt{17}) \) \\
\hline
15 & \( 134 - 62\sqrt{5} \) & \( 2160 - 706\sqrt{13} \) & \( 18977 - 4559\sqrt{17} \) \\
\hline
16 & \( 176 - 80\sqrt{5} \) & \( 3699 - 891\sqrt{13} \) & \( \frac{1}{2}(59823 - 14961\sqrt{17}) \) \\
\hline
17 & \( 233 - 103\sqrt{5} \) & \( 4761 - 1467\sqrt{13} \) & \( 48557 - 11627\sqrt{17} \) \\
\hline
18 & \( \frac{1}{2}(603 - 267\sqrt{5}) \) & \( \frac{1}{2}(15069 - 3855\sqrt{13}) \) & \( 76106 - 18308\sqrt{17} \) \\
\hline
19 & \( 377 - 175\sqrt{5} \) & \( 9986 - 2970\sqrt{13} \) & \( 117103 - 28745\sqrt{17} \) \\
\hline
20 & \( \frac{1}{2}(1017 - 429\sqrt{5}) \) & \( 15069 - 3957\sqrt{13} \) & \( \frac{1}{2}(363839 - 87715\sqrt{17}) \) \\
\hline
21 & \( 608 - 288\sqrt{5} \) & \( 20174 - 5854\sqrt{13} \) & \( 276958 - 67034\sqrt{17} \) \\
\hline
22 & \( \frac{1}{2}(1625 - 693\sqrt{5}) \) & \( \frac{1}{2}(58843 - 15677\sqrt{13}) \) & \( 417234 - 101814\sqrt{17} \) \\
\hline
23 & \( 985 - 455\sqrt{5} \) & \( 39390 - 11286\sqrt{13} \) & \( 629047 - 152045\sqrt{17} \) \\
\hline
24 & \( 1265 - 556\sqrt{5} \) & \( 56002 - 15115\sqrt{13} \) & \( \frac{1}{2}(1871277 - 453479\sqrt{17}) \) \\
\hline
25 & \( 1564 - 706\sqrt{5} \) & \( 74911 - 21265\sqrt{13} \) & \( 1381414 - 335786\sqrt{17} \) \\
\hline
\end{tabular}
\vspace{0.2cm}
\caption{$a_{D}(N)$ corresponding to $D=5,13$ and $17$, for \( N = 1 \) to \( 25 \).}

\label{tab:coefficients-5-13-17}
\end{table}

\FloatBarrier
\section{Acknowledgements}
We thank Alex Kontorovich, Don Zagier and Wadim Zudilin for valuable discussions and helpful comments on the subject of the paper.
\printbibliography
\end{document}